\def\blfootnote{\gdef\@thefnmark{}\@footnotetext}
\title{Complete classification of Brieskorn polynomials up to the arc-analytic equivalence\blfootnote{2010 Mathematics Subject Classification: 14B05 (32S15 14P99 14E18).}\blfootnote{Keywords: singular Nash function germs, arc-analytic equivalence, Brieskorn polynomials, motivic zeta functions, virtual Poincar\'e polynomial.}}
\newcommand\shorttitle{Complete arc-analytic classification of Brieskorn polynomials}
\author{Jean-Baptiste Campesato\footnote{\begin{minipage}[t]{\textwidth}Department of Mathematics, Faculty of Science, Saitama University, \\255 Shimo-Okubo, Sakura-ku, Saitama 338-8570, Japan. \\ E-mail address: \url{jbcampesato@mail.saitama-u.ac.jp}\\Research supported by a Japan Society for the Promotion of Science (JSPS) Postdoctoral Fellowship (Short-term) for North American and European Researchers.\end{minipage}}}
\date{August 15, 2017}
    \DeclareSymbolFont{calletters}{OMS}{cmsy}{m}{n}
    \DeclareSymbolFontAlphabet{\mathcal}{calletters}
\definecolor{darkred}{rgb}{.5,0,0}
\definecolor{darkgreen}{rgb}{0,.5,0}
\definecolor{darkblue}{rgb}{0,0,.5}
\renewcommand{\thepage}{\arabic{page}}
\shorttitle]{Jean-Baptiste Campesato}
\def\footindent{2em}
\renewcommand\@makefntext[1]{\leftskip=\footindent\hskip-\footindent\@makefnmark#1}
\newcommand*{\fnsymbolsingle}[1]{\ensuremath{\ifcase#1\or\star\or\dagger\or\ddagger\or\textsection\or\|\or\textparagraph\or\else\@ctrerr\fi}}
\newcommand*{\fnsymbolsingle}[1]{\ensuremath{\ifcase#1\or\star\or\dagger\or\ddagger\or\mathsection\or\|\or\mathparagraph\or\else\@ctrerr\fi}}
\newalphalph{\fnsymbolmult}[mult]{\fnsymbolsingle}{}
\renewcommand{\maketitle}{
  \newpage
  \null
  \thispagestyle{plain}
  \begin{center}
    {\LARGE \@title \par}
    \vskip 1.5em
    {\large
      \lineskip .5em
        \@author
      \par}
    \vskip 1em
    {\large \@date}
  \end{center}
  \par
  \vskip 1.5em}
\theoremstyle{plain}
\newtheorem{thm}{Theorem}[section]
\newtheorem{prop}[thm]{Proposition}
\newtheorem{cor}[thm]{Corollary}
\newtheorem{lemma}[thm]{Lemma}
\theoremstyle{definition}
\newtheorem{defn}[thm]{Definition}
\newtheorem{eg}[thm]{Example}
\newtheorem{rem}[thm]{Remark}
\newtheorem{notation}[thm]{Notation}
\setlist[itemize]{labelindent=.6em, itemindent=1em, leftmargin=!, label=\textbullet}
  \newcommand{\TODO}[1]{\@ifmtarg{#1}{\emph{\textbf{TODO}}~}{\emph{\textbf{TODO:}~#1~}}}
\newcommand{\ac}{\operatorname{ac}}
\renewcommand{\d}{\mathrm{d}}
\newcommand{\X}{\mathfrak X}
\newcommand{\mon}{\mathrm{mon}}
\newcommand{\pr}{\operatorname{pr}}
\newcommand{\AS}{\mathcal{AS}}
\newcommand{\id}{\operatorname{id}}
\newcommand{\M}{\mathcal M}
\newcommand{\bigast}{\mathop{\scalebox{1.5}{\raisebox{-0.2ex}{$*$}}}}
\newcommand{\lcm}{\operatorname{lcm}}
\newcommand{\naive}{\mathrm{naive}}
\def\acts{\ensuremath{\rotatebox[origin=c]{-90}{$\circlearrowright$}}}
\newcommand{\sgn}{\operatorname{sgn}}
\newcommand{\vast}{\bBigg@{4}}
\newcommand{\Vast}{\bBigg@{5}}
\def\acts{\ensuremath{\rotatebox[origin=c]{-90}{$\circlearrowright$}}}
\begin{document}
\maketitle

\begin{abstract}
It has been recently proved that the arc-analytic type of a singular Brieskorn polynomial determines its exponents. This last result may be seen as a real analogue of a theorem by E. Yoshinaga and M. Suzuki concerning the topological type of complex Brieskorn polynomials. In the real setting it is natural to investigate further by asking how the signs of the coefficients of a Brieskorn polynomial change its arc-analytic type.

The aim of the present paper is to answer this question by giving a complete classification of Brieskorn polynomials up to the arc-analytic equivalence. The proof relies on an invariant of this relation whose construction is similar to the one of Denef--Loeser motivic zeta functions.

The classification obtained generalizes the one of Koike--Parusiński in the two variable case up to the blow-analytic equivalence and the one of Fichou in the three variable case up to the blow-Nash equivalence.
\end{abstract}

\tableofcontents

\section{Introduction}
In order to obtain a classification of real singularities with no continuous moduli, T.-C. Kuo \cite{Kuo85} introduced the blow-analytic equivalence. He proved that it is an equivalence relation for real analytic function germs with no continuous moduli for isolated singularities.

S. Koike and A. Parusiński \cite{KP03} introduced invariants of the blow-analytic equivalence constructed similarly to Denef--Loeser motivic zeta functions \cite{DL98} but realized through the Euler characteristic with compact support. Using these invariants, together with the Fukui invariants \cite{Fuk97}, they classified entirely Brieskorn polynomials\footnote{i.e. polynomials of the form $\pm x^p\pm y^q$.} in two variables up to the blow-analytic equivalence \cite[Theorem 6.1]{KP03}. However, for the three variable case, these invariants do not allow them to distinguish the following Brieskorn polynomials \cite[Theorem 7.3]{KP03}: $x^p+y^{kp}+z^{kp}$ and $-x^p-y^{kp}-z^{kp}$ where $p$ is even.

Then G. Fichou \cite{Fic05} introduced the blow-Nash equivalence as a semialgebraic version of the blow-analytic equivalence allowing him to construct richer invariants. These invariants are still constructed analogously to Denef--Loeser motivic zeta functions, but, this time, realized through the virtual Poincaré polynomial, a real analogue of the Hodge--Deligne polynomial due to C. McCrory and A. Parusiński \cite{MP03,MP11}. Using these invariants, G. Fichou classified entirely Brieskorn polynomials\footnote{i.e. polynomials of the form $\pm x^p\pm y^q\pm z^r$.} in three variables up to the blow-Nash equivalence by showing that the Fichou motivic zeta functions of the above cited Brieskorn polynomials are different \cite[Example 4.10]{Fic05}.

In this paper, we are going to work with a slightly different framework by considering the classification of Brieskorn polynomials up to the arc-analytic equivalence. This relation was introduced in \cite{JBC2}. It is a characterization of the blow-Nash equivalence in simpler terms. It also allows one to prove it is an equivalence relation for Nash function germs, which was expected but not known yet for the blow-Nash equivalence. Moreover, using recent results due to A. Parusiński and L. Păunescu \cite{PP}, one may prove it has no continuous moduli even without the isolated singularitiy condition.

It is already known that the arc-analytic type of a singular Brieskorn polynomial determines its exponents \cite[Corollary 8.4]{JBC2}. This last result is a real analogue of a theorem by E. Yoshinaga and M. Suzuki \cite{YS78} concerning the topological type of complex Brieskorn polynomials. In the complex case, it is possible to assume that the coefficients of a Brieskorn polynomial are all equal to $1$ using a linear change of variables. This is not possible with real numbers where we can only assume that the coefficient of a monomial of even degree is either $1$ or $-1$. So, it is natural to wonder what is the impact of the signs of the coefficients of a Brieskorn polynomial on its arc-analytic type.

The aim of this papier is to answer this question by giving a complete classification of Brieskorn polynomials up to the arc-analytic equivalence.

The main result is divided into two parts. First, it states that the motivic invariant introduced in \cite{JBC2} is a complete arc-analytic invariant for Brieskorn polynomials. Next, it explains how to determine the arc-analytic type of a singular Brieskorn polynomial from its exponents and coefficients.

More precisely, we show that two singular Brieskorn polynomials are arc-analytically equivalent if and only if they share the same exponents and if their coefficients of even degree not multiple of an odd exponent share the same signs. In order to prove this theorem, we are going to show that we may recover the concerned signs from the real motivic zeta function introduced in \cite{JBC2}, which is enough since it is an invariant of the arc-analytic equivalence. The proof relies on formulae given in the last section for the virtual Poincaré polynomials of the fibers of a Brieskorn polynomial.

In addition to completing the arc-analytic classification of Brieskorn polynomials, this result highlights some differences between the arc-analytic classification and other real analytic function germ classifications. On the one hand, the arc-analytic classification is finer than the $C^0$-(right-)equivalence. Indeed, contrary to the complex case, the $C^0$ class of a Brieskorn polynomial doesn't determine its exponents. For example, if a Brieskorn polynomial has an odd exponent, it is $C^0$-equivalent to $\pr_1:(x_1,\ldots,x_d)\mapsto x_1$ and therefore, $x^2-y^3$ is $C^0$-equivalent to $x^2-y^9$. On the other hand, the arc-analytic classification is coarser than the analytic-(right-)equivalence. Let $p$ be an odd number, then, by Proposition \ref{prop:KPgen}, $x^p+y^{kp}$ and $x^p-y^{kp}$ are arc-analytically equivalent, hence the sign of the coefficient in front of $y^{kp}$ is not determined by the arc-analytic type. This phenomenon doesn't hold anymore for the analytic equivalence or the bi-Lipschitz equivalence (see \cite[Example 6.3]{HP04}). Nevertheless, these latter admit continuous moduli contrary to the arc-analytic equivalence.

One could investigate further by obtaining a complete arc-analytic classification of weighted homogeneous polynomials. It is currently known that the arc-analytic type of a weighted homogeneous polynomial which is also Newton non-degenerate and convenient determines its weights \cite{JBC4}, but nothing is known about the coefficients. \\

\noindent\textbf{Acknowledgements.} I am sincerely grateful to Adam Parusiński for our fruitful discussions during the preparation of this article. I express my gratitude and thanks to Toshizumi Fukui who warmly welcomed me in Saitama University where this work has been carried out.

\section{Recollection}
We refer the reader to \cite[\S7]{JBC3} for a survey concerning the material covered in this section.
\subsection{The arc-analytic equivalence}
\begin{defn}[{\cite[Definition 7.5]{JBC2}}]
Two Nash\footnote{A Nash function is a smooth function with semialgebraic graph. Such a function is necessarily real analytic.} function germs $f,g:(\mathbb R^d,0)\rightarrow(\mathbb R,0)$ are said to be arc-analytically equivalent if there exists a semialgebraic homeomorphism $h:(\mathbb R^d,0)\rightarrow(\mathbb R^d,0)$ such that
\begin{enumerate}[label=(\roman*),nosep]
\item $f=g\circ h$
\item $h$ is arc-analytic\footnote{This is a notion due to K. Kurdyka \cite{Kur88}.}, i.e. it maps real analytic arcs to real analytic arcs by composition.
\item There exists $c>0$ such that $|\det\d h|>c$ where $\d h$ is defined\footnote{Indeed, K. Kurdyka proved that a semialgebraic arc-analytic map is real analytic outside a set of codimension at least 2.}.
\end{enumerate}
\end{defn}

\begin{rem} We recall the following properties of the arc-analytic equivalence.
\begin{itemize}[nosep]
\item It is an equivalence relation \cite[Proposition 7.7]{JBC2}: indeed, under the assumptions of the definition, $h^{-1}$ is arc-analytic and there exists $c'>0$ such that $|\det\d h^{-1}|>c'$, see \cite{FKP10} and \cite[Corollary 3.6]{JBC1}.
\item It coincides with the blow-Nash equivalence of Fichou \cite[Proposition 7.9]{JBC2}.
\item It has no continuous moduli by \cite{PP}, see \cite[Theorem 7.5]{JBC3}.
\end{itemize}
\end{rem}

The following result will be useful. It is a weak version of an arc-analytic analog of a theorem by T. Fukui and L. Păunescu for the blow-analytic equivalence \cite[Theorem (0.2)]{FP00}.
\begin{thm}[{\cite[Corollary 4.5]{Fic05}}]\label{thm:TFWeighted}
Let $F:(\mathbb R^d\times I,\{0\}\times I)\rightarrow(\mathbb R,0)$ be Nash where $I$ is a compact interval. For $t\in I$, set $f_t=F(\cdot,t):(\mathbb R^d,0)\rightarrow(\mathbb R,0)$. Assume that for all $t\in I$, $f_t$ admits an isolated singularity at the origin, it is weighted homogenous and that the weight system doesn't depend on $t$. Then for all $t,t'\in I$, $f_t$ and $f_{t'}$ are arc-analytically equivalent.
\end{thm}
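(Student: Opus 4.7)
The plan is to adapt the classical Fukui-Păunescu trivialization strategy by constructing a semialgebraic vector field whose flow preserves $F$, and then integrating it after a Nash resolution of $\{F=0\}$. By compactness of $I$ it is enough to establish the local statement: for every $t_{0}\in I$ there is a neighborhood $J\subset I$ of $t_{0}$ such that $f_{t}$ is arc-analytically equivalent to $f_{t_{0}}$ for all $t\in J$. The global statement then follows by covering $I$ by finitely many such $J$'s and composing the corresponding equivalences, using that the relation is transitive.

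The construction rests on the Euler identity. Let $E=\sum_{i=1}^{d}w_{i}x_{i}\partial_{x_{i}}$ be the Euler vector field and $\delta$ the common weighted degree. Since the weight system does not depend on $t$, one has $E(F)=\delta\cdot F$ identically. I would then consider the $(d+1)$-dimensional vector field
\[
V \;=\; \partial_{t} \;-\; \frac{\partial_{t}F}{\delta\,F}\,E
\]
defined on the complement of $\{F=0\}$. A direct computation gives $V(F)=0$, so its integral curves lie in level sets of $F$: if $V$ can be integrated to a semialgebraic flow $h_{t}:(\mathbb{R}^{d},0)\to(\mathbb{R}^{d},0)$ with $h_{t_{0}}=\id$, then automatically $f_{t_{0}}=f_{t}\circ h_{t}$, which is the identity required by the definition of arc-analytic equivalence.

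The main obstacle, and the reason the isolated singularity and uniform weight hypotheses enter, is the extension of $V$ across the pole locus $\{F=0\}$. To deal with it one takes a Nash embedded resolution $\sigma:\widetilde{\X}\to\mathbb{R}^{d}\times I$ of $\{F=0\}$ that is uniform over $I$, the uniformity being possible because the singular locus of $F$ is the Nash section $\{0\}\times I$ by the isolated singularity assumption, in the spirit of \cite[\S 4]{Fic05}. In suitable local coordinates on $\widetilde{\X}$ near a point of the exceptional divisor one has $F\circ\sigma=u\cdot\prod y_{i}^{N_{i}}$ with $u$ a Nash unit; consequently the problematic ratio pulls back to
\[
\frac{\partial_{t}F}{F}\circ\sigma \;=\; \frac{\partial_{t}u}{u},
\]
which is regular and Nash. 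Since $E$ vanishes at $0\in\mathbb{R}^{d}$, its Nash lift is tangent to the exceptional divisor, and combined with the regular factor above this produces a Nash vector field $\widetilde{V}$ on $\widetilde{\X}$ extending $\sigma^{*}V$. Integrating $\widetilde{V}$ over $J$ yields a Nash isotopy $\widetilde{h}_{t}$ of $\widetilde{\X}$ fixing the exceptional divisor, and pushing it down through $\sigma$ gives the desired semialgebraic arc-analytic homeomorphism $h_{t}$. The Jacobian bound $|\det dh_{t}|>c>0$ then follows by chasing the determinant through $\sigma$, using the Nash diffeomorphism property of $\widetilde{h}_{t}$ together with compactness of $J$. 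The most delicate point throughout is producing a resolution that works uniformly in $t$ and checking that $\widetilde{V}$ extends Nash-smoothly across the exceptional divisor; once these are in hand, the argument follows the Fukui-Păunescu pattern transposed into the Nash category.
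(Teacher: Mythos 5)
The paper does not contain a proof of this theorem; the statement is quoted directly from Fichou \cite[Corollary 4.5]{Fic05}, so there is no internal proof to compare your argument against. Your general plan --- reduce to a local statement by compactness of $I$, build a vector field annihilating $F$, and integrate it after a Nash resolution --- is the right pattern, in the spirit of Fukui--Păunescu and of Fichou's \S4. But the specific vector field you choose creates a genuine gap.

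The field $V=\partial_t-\frac{\partial_t F}{\delta F}E$ has a pole along the whole hypersurface $\{F=0\}$, not just at the origin, and this pole cannot be removed by resolution. The isolated singularity hypothesis only guarantees that $\{f_t=0\}$ is smooth away from $0$, not that it reduces to $\{0\}$; already for $F=x^2-(1+t)y^2$ the real zero fiber is a pair of lines, $\partial_t F=-y^2$ does not vanish on them, and so $\partial_tF/F$ has a non-removable simple pole along $\{F=0\}\setminus(\{0\}\times I)$. After resolution, the factors in $F\circ\sigma=u\prod y_i^{N_i}$ come both from exceptional components lying over $\{0\}\times I$ and from the proper transform of $\{F=0\}$, and the latter typically moves with $t$; one therefore cannot simultaneously have $\sigma$ commute with $\partial_t$ (so that $(\partial_t F)\circ\sigma=\partial_t(F\circ\sigma)$) and have $u$ a genuine unit near the proper transform, and the claimed identity $(\partial_t F/F)\circ\sigma=\partial_t u/u$ breaks down exactly there. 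In the example above, even after blowing up the origin one computes $(\partial_t F/F)\circ\sigma=-v^2/\bigl(1-(1+t)v^2\bigr)$, which is singular along the proper transform. The correct vector field, as in Fukui--Păunescu \cite{FP00} and in the proof of Fichou's Theorem 4.4, has a denominator given by a (weighted) gradient norm of $F$ rather than $F$ itself: that denominator vanishes only on the critical locus of $F$, which by the isolated singularity hypothesis is exactly $\{0\}\times I$, and the fixed weight system then ensures the lifted field extends Nash-regularly after a weighted blowup of the origin. With that replacement the remaining steps of your outline (lifting, integrating, descending, bounding the Jacobian) are the right ones.
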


\subsection{A motivic invariant of the arc-analytic equivalence}

\begin{defn}[{\cite[\S4.2]{Par04}}]
An $\AS$-set is a semialgebraic subset $A\subset\mathbb P^n_\mathbb R$ such that given a real analytic arc $\gamma:(-1,1)\rightarrow\mathbb P^n_\mathbb R$ satisfying $\gamma(-1,0)\subset A$ there exists $\varepsilon>0$ such that $\gamma(0,\varepsilon)\subset A$.
\end{defn}

\begin{rem}[{\cite[\S4.2]{Par04}}]
The $\AS$-subsets of $\mathbb P^n_\mathbb R$ form the boolean algebra spanned by semialgebraic arc-symmetric\footnote{A subset $S$ of a real analytic manifold $M$ is arc-symmetric if given a real analytic arc on $M$, either this arc is entirely included in $S$ or it meets $S$ at isolated points only. This is a a notion due to K. Kurdyka \cite{Kur88}.} subsets of $\mathbb P^n_\mathbb R$. Particularly, $\AS$ is closed under $\cup,\cap,\setminus$.
\end{rem}

The following remark will be useful for the computations in Section \ref{sub:SomeLemmas}.
\begin{rem}[{\cite[Proposition 4.6 \& Theorem 2.8]{Par04}}]
Real Zariski-constructible sets are $\AS$-sets.
\end{rem}

\begin{defn}
We denote by $K_0(\AS)$ the free abelian group spanned by the symbols $[A]$, $A\in\AS$, modulo:
\begin{enumerate}[label=(\roman*),nosep]
\item If there is a bijection $A\rightarrow B$ whose graph is in $\AS$ then $[A]=[B]$.
\item If $B$ is a closed $\AS$-subset of $A$ then $[A]=[A\setminus B]+[B]$.
\end{enumerate}
Moreover, $K_0(\AS)$ has a ring structure induced by the cartesian product:
\begin{enumerate}[resume,label=(\roman*),nosep]
\item $[A\times B]=[A][B]$.
\end{enumerate}
We denote by $0=[\varnothing]$ the class of the empty set which is the unit of the addition, by $1=[\{*\}]$ the class of the point which is the unit of the product and by $\mathbb L_\AS=[\mathbb R]$ the class of the affine line. We set $\M_\AS=K_0(\AS)\left[\mathbb L_\AS^{-1}\right]$ the localization of $K_0(\AS)$ with respect to $\left\{\mathbb L_\AS^{i},\,i\in\mathbb N\right\}$.
\end{defn}

\begin{thm}[{\cite{MP03}\cite{Fic05}\cite{MP11}}]\label{thm:vpp}
There exists a unique ring morphism $\beta:K_0(\AS)\rightarrow\mathbb Z[u]$, called the \emph{virtual Poincaré polynomial}, such that if $A\in\AS$ is compact and non-singular then $\beta([A])=\sum_i\dim H_i(A,\mathbb Z_2)u^i$.

Moreover, the virtual Poincaré polynomial encodes the dimension since if $A\in\AS$ is nonempty then $\deg\beta([A])=\dim A$ and the leading coefficient is positive.
\end{thm}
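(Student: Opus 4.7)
My plan follows the McCrory--Parusiński strategy. For \emph{uniqueness}, every AS-set admits a finite Nash stratification by Nash manifolds; applying relation (ii) inductively expresses $[A] \in K_0(\AS)$ as a sum of classes of locally closed Nash manifolds, and each such piece, via a Nash compactification whose boundary is resolved inside $\AS$, further decomposes (through (ii) again) into a $\mathbb Z$-linear combination of classes of compact non-singular AS-sets. Combined with multiplicativity (iii), this forces the value of any ring morphism $\beta$ extending the prescribed formula on compact non-singular varieties.

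For \emph{existence}, I would first construct $\beta$ on the Grothendieck ring of real algebraic varieties following MP03, using a weight filtration on the Borel--Moore homology $H^{BM}_\ast(X;\mathbb Z_2)$ built from a compactified resolution of $X$. Additivity under closed algebraic decompositions would follow from the long exact sequence of Borel--Moore homology together with functoriality and strictness of this filtration, and multiplicativity from Künneth. Passing from real algebraic varieties to AS-sets, as in Fic05 and MP11, one stratifies any AS-set into Nash manifolds, each admitting a real algebraic model (up to the scissor relations), and checks independence of the chosen stratification and compactification by common refinement. The moreover clause would then follow because a nonempty $A \in \AS$ of dimension $d$ contains, in any Nash stratification, an open non-singular piece of pure dimension $d$ whose compactification contributes a term of degree $d$ with positive leading coefficient $\dim H_d(\cdot;\mathbb Z_2) \geq 1$, while lower-dimensional strata contribute, by induction on dimension, terms of strictly smaller degree; no cancellation at the top occurs since all leading contributions are non-negative integers.

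The principal obstacle is the well-definedness of $\beta$ in the existence step: two different stratifications or compactifications of the same AS-set must produce the same polynomial. This is precisely where the weight filtration of MP03 is essential, as it provides a canonical invariant depending only on $X$, resting in turn on resolution of singularities and weak factorization of birational maps. The extension from algebraic sets to $\AS$ then requires verifying that Nash approximation and Nash resolution of singularities do not affect the resulting polynomial, which is the technical contribution of MP11 and Fic05.
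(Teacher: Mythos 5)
The paper does not prove Theorem~\ref{thm:vpp}: it is stated as a background fact, with \cite{MP03}, \cite{Fic05}, \cite{MP11} cited in the theorem header in place of a proof. There is therefore no in-paper argument to compare your proposal against; all one can do is assess how faithfully your sketch reflects the cited construction.

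Your outline captures the standard strategy: uniqueness by reducing, through Nash stratification, Nash compactification, and resolution inside $\AS$, to classes of compact nonsingular pieces where $\beta$ is prescribed and then invoking additivity and multiplicativity; existence by first building the invariant on real algebraic varieties and then extending to $\AS$-sets; and the degree/positivity clause by induction on dimension using an open nonsingular top stratum. One attribution should be adjusted. In \cite{MP03} the virtual Betti numbers are not constructed directly from a weight filtration on Borel--Moore homology; the short note proceeds via an extension criterion for additive invariants (in the spirit of Bittner and Guillén--Navarro Aznar), reducing well-definedness to the behaviour of $\mathbb Z_2$-Betti numbers under blow-ups of compact nonsingular real algebraic varieties. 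The weight filtration is the subject of \cite{MP11} and gives a more structural account of the same invariant. The passage from real algebraic varieties to $\AS$-sets is indeed the content of \cite{Fic05}, by the mechanism you describe. Finally, your ``no cancellation'' argument for the moreover clause is correct in substance, but you should make explicit the input it relies on: the top-dimensional nonsingular Nash stratum admits a nonsingular Nash compactification whose boundary is an $\AS$-set of strictly smaller dimension (this is where Nash resolution of singularities is used). Granting that, $\beta$ of the compactification has degree $d=\dim A$ with leading coefficient $\dim H_d(\cdot,\mathbb Z_2)\ge 1$, and the lower-dimensional pieces cannot affect the top term.
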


\begin{eg}
$\beta([\mathbb R])=\beta([\mathbb P^1_\mathbb R\setminus\{*\}])=\beta([\mathbb P^1_\mathbb R])-\beta([*])=u+1-1=u$
\end{eg}

\begin{rem}
The virtual Poincaré polynomial induces a ring morphism $\beta:\M_\AS\rightarrow\mathbb Z[u,u^{-1}]$.
\end{rem}

\begin{rem}
Using the cell decomposition property of semialgebraic sets, one may prove that every additive invariant of the semialgebraic sets up to semialgebraic homeomorphisms factorizes through the Euler characteristic with compact support, see \cite{Qua01}. It is then not possible to recover the dimension since, for example, $\chi_c(S^1)=0$. Hence, working with $\AS$-sets allows us to use the virtual Poincaré polynomial which is an additive invariant encoding more information. Notice also that $\beta([A])_{|u=-1}=\chi_c(A)$.
\end{rem}

The following Grothendieck group is a real analogue of the one defined by Guibert--Loeser--Merle \cite{GLM} that fits our context.
\begin{defn}[{\cite[Definition 3.4, Notation 3.5]{JBC2}}]
We denote by $K_0(\AS_\mon^n)$ the free abelian group spanned by symbols $[\varphi_X:\mathbb R^*\acts X\rightarrow\mathbb R^*]$, where $X\in\AS$, the graph $\Gamma_{\varphi_X}\in\AS$, the graph of the action $\Gamma_{\mathbb R^*\times X\rightarrow X}\in\AS$ and $\varphi_X(\lambda\cdot x)=\lambda^n\varphi_X(x)$, modulo the relations:
\begin{enumerate}[label=(\roman*),nosep]
\item If there is a $\mathbb R^*$-equivariant bijection $h:X\rightarrow Y$ with $\AS$-graph such that $\varphi_X=\varphi_Y\circ h$ then $$[\varphi_X:\mathbb R^*\acts X\rightarrow\mathbb R^*]=[\varphi_Y:\mathbb R^*\acts Y\rightarrow\mathbb R^*]$$
\item If $Y\subset X$ is a closed $\mathbb R^*$-invariant $\AS$-subset then $$[\varphi_X:\mathbb R^*\acts X\rightarrow\mathbb R^*]=[\varphi_{X|X\setminus Y}:\mathbb R^*\acts X\setminus Y\rightarrow\mathbb R^*]+[\varphi_{X|Y}:\mathbb R^*\acts Y\rightarrow\mathbb R^*]$$
\item Fix $\varphi_Y:\mathbb R^*\acts_\tau Y\rightarrow\mathbb R^*$ satisfying the required conditions to be a symbol. Set $\psi=\varphi_Y\pr_Y:Y\times\mathbb R^m\rightarrow\mathbb R^*$. If $\sigma$ and $\sigma'$ are two actions of $\mathbb R^*$ on $Y\times\mathbb R^m$ which are liftings of $\tau$ then $\psi:\mathbb R^*\acts_{\sigma}(Y\times\mathbb R^m)\rightarrow\mathbb R^*$ and $\psi:\mathbb R^*\acts_{\sigma'}(Y\times\mathbb R^m)\rightarrow\mathbb R^*$ satisfy the conditions to be symbols and we add the relation $$\left[\psi:\mathbb R^*\acts_{\sigma}(Y\times\mathbb R^m)\rightarrow\mathbb R^*\right]=\left[\psi:\mathbb R^*\acts_{\sigma'}(Y\times\mathbb R^m)\rightarrow\mathbb R^*\right]$$
\end{enumerate}
The fiber product over $\mathbb R^*$ induces a structure of ring by adding the following relation:
\begin{enumerate}[label=(\roman*),nosep,resume]
\item $[\mathbb R^*\acts(X\times_{\mathbb R^*}Y)\rightarrow\mathbb R^*]=[\varphi_X:\mathbb R^*\acts X\rightarrow\mathbb R^*][\varphi_Y:\mathbb R^*\acts Y\rightarrow\mathbb R^*]$ where the action of $\mathbb R^*$ on $X\times_{\mathbb R^*}Y$ is the diagonal action.
\end{enumerate}
The cartesian product induces a structure of $K_0(AS)$-algebra by adding the following relation:
\begin{enumerate}[label=(\roman*),nosep,resume]
\item $[A][\varphi_X:\mathbb R^*\acts_\tau X\rightarrow\mathbb R^*]=[\varphi_X\circ\pr_X:\mathbb R^*\acts_\sigma A\times X\rightarrow\mathbb R^*]$ where $\lambda\cdot_\sigma(a,x)=(a,\lambda\cdot_\tau x)$.
\end{enumerate}
\ \\
We set $K_0(\AS_\mon)=\varinjlim K_0(\AS_\mon^n)$ where the direct system is defined as follows. For $n=km$ with $k\in\mathbb N_{>0}$, we set $\theta_{mn}:K_0(\AS_\mon^m)\rightarrow K_0(\AS_\mon^n)$ defined by $[\varphi_X:\mathbb R^*\acts_m X\rightarrow\mathbb R^*]\mapsto[\varphi_X:\mathbb R^*\acts_n X\rightarrow\mathbb R^*]$ where $\lambda\cdot_n x=\lambda^k\cdot_m x$.
\end{defn}

\begin{notation}
We denote by $0=[\emptyset]$ the class of the empty set which is the unit of the addition, by $\mathbb 1=[\id:\mathbb R^*\acts\mathbb R^*\rightarrow\mathbb R^*]$ the class of the identity which is the unit of the product and by $$\mathbb L=\mathbb L_\AS\mathbb 1=[\pr_2:\mathbb R^*\acts\mathbb R\times\mathbb R^*\rightarrow\mathbb R^*]$$ the class of the affine line.

We set $\M=K_0(\AS_\mon)[\mathbb L^{-1}]$. Notice that $\M$ has a natural structure of $\M_\AS$-algebra.
\end{notation}

\begin{prop}[The forgetful morphism {\cite[\S3]{JBC2}}]
There exists a unique morphism of $\M_\AS$-modules $\overline{\vphantom{I}\hspace{1mm}\cdot\hspace{1mm}}:\M\rightarrow\M_\AS$ such that $\overline{\left[\varphi_X:\mathbb R^*\acts X\rightarrow\mathbb R^*\right]}=[X]$.
\end{prop}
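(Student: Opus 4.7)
The plan is to first define a forgetful assignment on each pre-group $K_0(\AS_\mon^n)$ by sending a generator $[\varphi_X:\mathbb R^*\acts X\rightarrow\mathbb R^*]$ to $[X]\in K_0(\AS)$, extending $\mathbb Z$-linearly, and then verifying that the defining relations (i)--(v) are respected. Relations (i) and (ii) reduce to the corresponding relations in $K_0(\AS)$: an $\mathbb R^*$-equivariant $\AS$-bijection is a fortiori an $\AS$-bijection, and a closed $\mathbb R^*$-invariant $\AS$-subset is still a closed $\AS$-subset, so the scissor relation applies unchanged. Relation (iii), which identifies two different liftings $\sigma,\sigma'$ of an $\mathbb R^*$-action on $Y\times\mathbb R^m$, becomes trivial after forgetting, since both symbols share the same underlying $\AS$-set $Y\times\mathbb R^m$; this is precisely the step at which discarding the action data simplifies matters. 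The fiber-product relation (iv) and the $K_0(\AS)$-module relation (v) reduce to the multiplicativity $[A\times X]=[A][X]$ in $K_0(\AS)$, which makes the resulting map $K_0(\AS)$-linear.

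Next, I would verify compatibility with the transition morphisms $\theta_{mn}$ of the direct system defining $K_0(\AS_\mon)=\varinjlim K_0(\AS_\mon^n)$. Since $\theta_{mn}$ only rescales the action from $\acts_m$ to $\acts_n$ without touching the underlying space, the maps on each level glue into a single $K_0(\AS)$-linear morphism $K_0(\AS_\mon)\to K_0(\AS)$. Composition with the inclusion $K_0(\AS)\hookrightarrow\M_\AS$ yields an $\M_\AS$-linear morphism $K_0(\AS_\mon)\otimes_{K_0(\AS)}\M_\AS\to\M_\AS$. Uniqueness at this stage is automatic, since the classes $[\varphi_X:\mathbb R^*\acts X\rightarrow\mathbb R^*]$ generate $\M$ as an $\M_\AS$-module, so any $\M_\AS$-linear extension is forced by the prescription on generators.

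It then remains to extend the map to the localization $\M=K_0(\AS_\mon)[\mathbb L^{-1}]$, which amounts to checking that $\overline{\mathbb L}=[\mathbb R\times\mathbb R^*]$ is invertible in the target. I expect this localization step to be the main obstacle: the formula $\overline{\mathbb L}=\mathbb L_\AS[\mathbb R^*]=\mathbb L_\AS(\mathbb L_\AS-1)$ requires one to invert not only $\mathbb L_\AS$ but also $\mathbb L_\AS-1$, so the argument must rely on the precise conventions of $\M_\AS$ developed in \cite[\S3]{JBC2}. Once this invertibility is granted, the map extends uniquely to the desired $\M_\AS$-linear morphism $\overline{\cdot}:\M\to\M_\AS$, and by contrast the verification of the five relations is a routine check, all instances of which say that the forgetful assignment simply discards the additional action data that each relation is parameterized by.
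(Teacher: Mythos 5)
Your overall strategy — define on generators, check the defining relations, glue along the direct system, extend to the localization — is the right one, and the treatment of (i), (ii), (iii) and the transition maps $\theta_{mn}$ is correct. But there are two points where the argument goes astray, and the second one leaves a real gap.

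First, your claim that relation (iv) ``reduces to the multiplicativity $[A\times X]=[A][X]$'' is incorrect. Relation (iv) is the fiber-product ring structure, and under the forgetful assignment it would read $[X\times_{\mathbb R^*}Y]=[X][Y]$, which is \emph{false} in general: the fiber product is not the cartesian product. In fact the paper's remark immediately after the proposition makes exactly this point, with the explicit failure $\beta(\overline{\mathbb 1})=u+1\neq 1$. The resolution is not that (iv) is satisfied, but that it does not need to be: (iv) only defines the ring multiplication, whereas you are constructing a morphism of $\M_\AS$-modules, for which only (i)--(iii) (well-definedness) and (v) ($K_0(\AS)$-linearity) matter. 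Conflating (iv) with (v) obscures this.

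Second, and more substantially, your worry about the localization step is misdirected and you leave it unresolved. You do not need $\overline{\mathbb L}=\mathbb L_\AS(\mathbb L_\AS-1)$ to be a unit in $\M_\AS$ — that would only be required for a ring morphism, which this is not. What you need is that the action of $\mathbb L$ on $K_0(\AS_\mon)$ agrees, through the $K_0(\AS)$-module structure, with scalar multiplication by something already invertible in $\M_\AS$. And it does: by the paper's notation $\mathbb L=\mathbb L_\AS\mathbb 1$, so multiplication by $\mathbb L$ in the ring $K_0(\AS_\mon)$ coincides with the $K_0(\AS)$-scalar action of $\mathbb L_\AS$ (this also falls out of the computation $(\mathbb R\times\mathbb R^*)\times_{\mathbb R^*}X\cong\mathbb R\times X$). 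Consequently $\M=K_0(\AS_\mon)[\mathbb L^{-1}]$ is, as a $K_0(\AS)$-module, just $K_0(\AS_\mon)\otimes_{K_0(\AS)}\M_\AS$, and the $K_0(\AS)$-linear forgetful map tensors up to an $\M_\AS$-linear map $\M\to\M_\AS$ sending $\mathbb L^{-n}[\varphi_X:\mathbb R^*\acts X\rightarrow\mathbb R^*]$ to $\mathbb L_\AS^{-n}[X]$. This is the piece of the argument your proposal identifies as ``the main obstacle'' but does not supply; without it, the proof is incomplete.
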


\begin{prop}[{\cite[Proposition 4.16]{JBC2}}]
For $\varepsilon\in\{+,-\}$, there exists a unique morphism of $\M_\AS$-algebras $F^\varepsilon:\M\rightarrow\M_\AS$ such that $F^\varepsilon\left(\left[\varphi_X:\mathbb R^*\acts X\rightarrow\mathbb R^*\right]\right)=\left[\varphi_X^{-1}(\varepsilon1)\right]$.
\end{prop}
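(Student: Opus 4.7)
The plan is to construct $F^\varepsilon$ in three stages: first define it on the generating symbols by the formula $[\varphi_X : \mathbb R^* \acts X \to \mathbb R^*] \mapsto [\varphi_X^{-1}(\varepsilon 1)] \in \M_\AS$; then check that this assignment respects each defining relation of $K_0(\AS_\mon^n)$; then verify compatibility with the transition maps $\theta_{mn}$ of the direct system $K_0(\AS_\mon) = \varinjlim K_0(\AS_\mon^n)$; and finally descend to the localization $\M = K_0(\AS_\mon)[\mathbb L^{-1}]$. Uniqueness is then immediate, because any $\M_\AS$-algebra morphism is determined by its values on the generators.

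Relations (i)--(v) each reduce to simple statements about fibres. Relation (i) holds because an $\mathbb R^*$-equivariant $\AS$-bijection $h:X\to Y$ intertwining $\varphi_X$ and $\varphi_Y$ restricts to an $\AS$-bijection $\varphi_X^{-1}(\varepsilon 1) \to \varphi_Y^{-1}(\varepsilon 1)$. Relation (ii) is the scissor relation in $K_0(\AS)$ applied to the decomposition $\varphi_X^{-1}(\varepsilon 1) = \varphi_{X|X\setminus Y}^{-1}(\varepsilon 1) \sqcup \varphi_{X|Y}^{-1}(\varepsilon 1)$. Relation (iii) is almost tautological, because the fibre $\psi^{-1}(\varepsilon 1) = \varphi_Y^{-1}(\varepsilon 1) \times \mathbb R^m$ depends only on the map $\psi$, not on the choice of lifted action $\sigma$ or $\sigma'$. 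Relations (iv) and (v) reflect the obvious set-theoretic identities $(X\times_{\mathbb R^*}Y)\cap\psi^{-1}(\varepsilon 1) = \varphi_X^{-1}(\varepsilon 1)\times\varphi_Y^{-1}(\varepsilon 1)$ and $(A\times X)\cap(\varphi_X\circ\pr_X)^{-1}(\varepsilon 1) = A\times\varphi_X^{-1}(\varepsilon 1)$, respectively matching the ring and $K_0(\AS)$-algebra structures on $\M_\AS$.

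Compatibility with the direct limit is immediate, since $\theta_{mn}$ alters only the $\mathbb R^*$-action on $X$ and leaves the map $\varphi_X$---hence its fibre $\varphi_X^{-1}(\varepsilon 1)$---unchanged. For the localization, computing $F^\varepsilon(\mathbb L) = [\pr_2^{-1}(\varepsilon 1)] = [\mathbb R \times \{\varepsilon 1\}] = \mathbb L_\AS$ shows that the image of $\mathbb L$ is already invertible in $\M_\AS$, so $F^\varepsilon$ extends uniquely to $\M$. The only step where one might worry in advance is relation (iii), since the quotient by equivalent actions is a genuine and non-obvious identification in the definition of $K_0(\AS_\mon^n)$; but it is trivialised by the observation that the fibre of $\psi$ over a single point is set-theoretically insensitive to the action on the source, so no real verification is required, and the entire construction reduces to bookkeeping.
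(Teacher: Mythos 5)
This proposition is imported verbatim from [JBC2, Proposition~4.16]; the paper you are reading does not reprove it, so there is no in-paper argument against which to compare your attempt. Judged on its own terms, your construction is the natural one and, as far as I can tell, it is correct: taking the fibre $\varphi_X^{-1}(\varepsilon 1)$ evidently commutes with $\AS$-bijection (relation~(i)), with the scissor decomposition by a closed invariant subset (relation~(ii)), and is insensitive to the choice of lifted action in relation~(iii) precisely because the action plays no role in the fibre; and the set-theoretic identities you record for relations~(iv) and~(v) correctly convert fibre product over $\mathbb R^*$ into cartesian product over a point and convert the $K_0(\AS)$-module action into cartesian product, which is why $F^\varepsilon$ is a ring and $\M_\AS$-algebra morphism while the forgetful map $\overline{\vphantom{I}\hspace{1mm}\cdot\hspace{1mm}}$ is not. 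Your observation that $\theta_{mn}$ only changes the action, together with $F^\varepsilon(\mathbb L)=\mathbb L_\AS$, correctly handles the direct limit and the localization, and uniqueness follows as you say.

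One small point you gloss over and could make explicit: it is not completely automatic that $\varphi_X^{-1}(\varepsilon 1)$ is itself an $\AS$-set. The hypothesis is that $\Gamma_{\varphi_X}$ is $\AS$; one then uses that $\Gamma_{\varphi_X}\cap\bigl(X\times\{\varepsilon1\}\bigr)$ is $\AS$ and is $\AS$-bijective to $\varphi_X^{-1}(\varepsilon 1)$ via the projection, so the class $\left[\varphi_X^{-1}(\varepsilon 1)\right]$ is well defined in $K_0(\AS)$. This is the only verification that deserves to be written out in an otherwise bookkeeping argument; the rest of what you wrote is fine.
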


\begin{rem}
The forgetful morphism is not compatible with the ring structures since the one on $\M$ is induced by the fiber product whereas the one on $\M_\AS$ is induced by cartesian product. Particularly $\beta\left(\overline{\mathbb 1}\right)=u+1\neq1=\beta(1)$.

However, the morphisms $F^\varepsilon$ are compatible with the ring structures since the fiber product over one point coincides with the cartesian product.
\end{rem}

\begin{defn}[{\cite[Definition 4.2]{JBC2}}]\label{defn:ZetaFun}
We define the motivic zeta function of a Nash function germ $f:(\mathbb R^d,0)\rightarrow(\mathbb R,0)$ by $$Z_f(T)=\sum_{n\ge1}\left[\ac_f^n:\mathbb R^*\acts\X_n(f)\rightarrow\mathbb R^*\right]\mathbb L^{-nd}T^n\in\M\llbracket T\rrbracket$$ where
\begin{itemize}[nosep]
\item $\X_n(f)=\left\{\gamma(t)=a_1t+\ldots+a_nt^n,\,a_i\in\mathbb R^d,\,f(\gamma(t))=ct^n+\cdots,\,c\neq0\right\}$
\item $\ac_f^n:\X_n(f)\rightarrow\mathbb R^*$ is the angular component map defined by $\ac_f^n(\gamma)=\ac(f\circ\gamma):=c$
\item $\lambda\cdot\gamma(t)=\gamma(\lambda t)$
\end{itemize}
so that $\left[\ac_f^n:\mathbb R^*\acts\X_n(f)\rightarrow\mathbb R^*\right]\in K_0(\AS_\mon^n)$.
\end{defn}

\begin{thm}[{\cite[Theorem 7.11]{JBC2}}]\label{thm:Zaainv}
If $f,g:(\mathbb R^d,0)\rightarrow(\mathbb R,0)$ are two arc-analytically equivalent Nash function germs then $Z_f(T)=Z_g(T)$.
\end{thm}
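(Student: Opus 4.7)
Let $h:(\mathbb R^d,0)\to(\mathbb R^d,0)$ be a semialgebraic arc-analytic homeomorphism with $|\det\mathrm{d}h|>c>0$ and $f=g\circ h$. The plan is to build, for each $n\ge 1$, an $\mathbb R^*$-equivariant $\AS$-isomorphism $\Phi_n:\X_n(f)\to\X_n(g)$ commuting with the angular components, i.e. $\ac_g^n\circ\Phi_n=\ac_f^n$. This yields $\left[\ac_f^n:\mathbb R^*\acts\X_n(f)\to\mathbb R^*\right]=\left[\ac_g^n:\mathbb R^*\acts\X_n(g)\to\mathbb R^*\right]$ in $K_0(\AS_\mon^n)$ for every $n$, whence $Z_f(T)=Z_g(T)$.

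To construct $\Phi_n$, take $\gamma\in\X_n(f)$. Since $\gamma$ is a polynomial (hence real analytic) arc and $h$ is arc-analytic, $h\circ\gamma$ is real analytic; let $\tilde\gamma$ be its truncation at order $n$, so that $h\circ\gamma-\tilde\gamma=O(t^{n+1})$. Because $g$ vanishes at $0$, a Taylor expansion gives $g\circ\tilde\gamma-g\circ h\circ\gamma=O(t^{n+1})$, and therefore
\[
g(\tilde\gamma(t))=g(h(\gamma(t)))+O(t^{n+1})=f(\gamma(t))+O(t^{n+1})=c\,t^n+\cdots,
\]
with the same leading coefficient $c\neq 0$. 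Thus $\tilde\gamma\in\X_n(g)$ and $\ac_g^n(\tilde\gamma)=c=\ac_f^n(\gamma)$. Set $\Phi_n(\gamma)=\tilde\gamma$. The $\mathbb R^*$-equivariance is clear, as the action $\lambda\cdot\gamma(t)=\gamma(\lambda t)$ commutes both with post-composition by $h$ and with truncation at order $n$. The same construction applied to $h^{-1}$, which is itself a semialgebraic arc-analytic homeomorphism with Jacobian bounded away from zero (the cited results of Fukui--Kurdyka--Parusiński and the author), produces a map $\Psi_n:\X_n(g)\to\X_n(f)$; the identity $\Psi_n\circ\Phi_n=\mathrm{id}$ follows by a second Taylor argument applied to $h^{-1}$ along the analytic arcs $h\circ\gamma$ and $\tilde\gamma$, and similarly in the other direction.

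It remains to show that the graph of $\Phi_n$ is an $\AS$-subset of $\X_n(f)\times\X_n(g)$. The sets $\X_n(f),\X_n(g)$ are themselves semialgebraic (they are defined inside a jet space by polynomial conditions on $a_1,\ldots,a_n$), and one can identify $\Gamma_{\Phi_n}$ with the locus of pairs $(\gamma,\tilde\gamma)$ such that $h\circ\gamma-\tilde\gamma$ vanishes to order at least $n+1$ in $t$. Expanding $h(\gamma(t))$ in $t$ yields coefficients that are semialgebraic functions of $(a_1,\ldots,a_n)$ (because $h$ itself is semialgebraic) and arc-analytic in $(a_1,\ldots,a_n)$ (because precomposing an analytic arc in the space of jets with $\gamma\mapsto h\circ\gamma$ produces an analytic arc, by arc-analyticity of $h$ applied to the resulting analytic arc in $\mathbb R^d$). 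Hence $\Gamma_{\Phi_n}$ is a semialgebraic arc-symmetric set, thus an $\AS$-set.

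The main obstacle is the last point: showing $\Gamma_{\Phi_n}\in\AS$ uses in an essential way that arc-analyticity of $h$ is stable under families of analytic arcs in the jet space, together with the Jacobian lower bound to guarantee that $\Psi_n$ makes sense (in particular that the order of $h\circ\gamma$ does not collapse and that $h^{-1}$ inherits the same regularity). Once this is established, the invariance of $Z_f$ under arc-analytic equivalence is purely formal from the definition of $K_0(\AS_\mon^n)$.
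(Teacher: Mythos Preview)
The present paper does not prove this theorem; it is quoted from \cite[Theorem 7.11]{JBC2} as background in the Recollection section, so there is no in-paper proof to compare against. I can only assess your sketch on its own terms.

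Your overall strategy---lift $h$ to the truncated arc spaces and show it induces an $\mathbb R^*$-equivariant $\AS$-bijection commuting with the angular components---is the right one and is essentially what is carried out in \cite{JBC2}. But the two steps you label as routine are precisely where the substance lies, and neither is justified by what you wrote.

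First, $\Psi_n\circ\Phi_n=\mathrm{id}$ is not ``a second Taylor argument''. What is needed is: if two analytic arcs through the origin have images under $h$ agreeing to order $n$, then the arcs themselves agree to order $n$. For a $C^1$ map this is immediate from the chain rule; for a merely arc-analytic map it can fail, and it is exactly the Jacobian bound $|\det\d h|>c$ (together with the same bound for $h^{-1}$) that makes it true. Turning this into a proof uses the blow-Nash description of $h$ (it becomes Nash after a finite composition of blowings-up with smooth centers) and is a genuine lemma, not a Taylor expansion.

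Second, your argument that $\Gamma_{\Phi_n}\in\AS$ has a gap. You want the Taylor coefficients in $t$ of $h(a_1t+\cdots+a_nt^n)$ to be arc-analytic as functions of $(a_1,\ldots,a_n)$. Your justification invokes ``arc-analyticity of $h$ applied to the resulting analytic arc in $\mathbb R^d$'', but arc-analyticity is a statement about one-parameter arcs only: it does not imply that the two-parameter family $(s,t)\mapsto h\bigl(\sum_i a_i(s)t^i\bigr)$ is jointly analytic, which is what you would need to differentiate in $t$ and obtain coefficients analytic in $s$. The proof in \cite{JBC2} again passes through the blow-Nash characterization: after resolving, the lift of $h$ is Nash, the induced map on jets is algebraic, and one descends via a motivic change-of-variables formula controlled by the Jacobian. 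You correctly flag this as ``the main obstacle'', but your sketch does not overcome it.
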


\begin{defn}[{\cite[Definition 6.11]{JBC2}}]
We define the modified zeta function of a Nash function germ $f:(\mathbb R^d,0)\rightarrow(\mathbb R,0)$ by $$\tilde Z_f(T)=Z_f(T)-\frac{\mathbb 1-Z_f^\naive(T)}{\mathbb 1-T}+\mathbb 1\in\M\llbracket T\rrbracket$$ where $Z_f^\naive(T)$ is obtained from $Z_f(T)$ by applying coefficientwise $\M\rightarrow\M,\,\alpha\mapsto\overline\alpha \mathbb 1$.
\end{defn}

\begin{rem}
By its very definition, the modified zeta function is also an arc-analytic invariant. Actually $Z_f(T)$ and $\tilde Z_f(T)$ encode the same information \cite[Corollary 6.14]{JBC2}:
$$Z_f(T)=\tilde Z_f(T)+\frac{\mathbb 1-\mathbb L^{-1}\tilde Z_f^\naive(T)}{\mathbb 1-\mathbb L^{-1}T}-\mathbb 1$$
\end{rem}

\begin{prop}[{\cite[Notation 6.1]{JBC2}}]
There exists a unique $K_0(\AS)$-bilinear map, called the convolution product, $*:K_0(\AS_\mon^m)\times K_0(\AS_\mon^n)\rightarrow K_0(\AS_\mon^{mn})$ satisfying the following relation on symbols
\begin{align*}
&\left[\varphi_X:\mathbb R^*\acts X\rightarrow\mathbb R^*\right]*\left[\varphi_Y:\mathbb R^*\acts Y\rightarrow\mathbb R^*\right]\\
&\quad\quad=-\left[\varphi_X+\varphi_Y:\mathbb R^*\acts_\neq\left(X\times Y\setminus(\varphi_X+\varphi_Y)^{-1}(0)\right)\rightarrow\mathbb R^*\right]\\
&\quad\quad\quad+\left[\pr_2:\mathbb R^*\acts_=\left((\varphi_X+\varphi_Y)^{-1}(0)\times\mathbb R^*\right)\rightarrow\mathbb R^*\right]
\end{align*}
where $\lambda\cdot_\neq(x,y)=(\lambda^n\cdot x,\lambda^m\cdot y)$ and $\lambda\cdot_=(x,y,r)=(\lambda^n\cdot x,\lambda^m\cdot y,\lambda^{mn}r)$.
\end{prop}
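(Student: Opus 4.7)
The plan is to define $*$ on pairs of generators by the displayed formula and then check well-definedness, bilinearity, and uniqueness. Uniqueness is automatic: since the symbols $[\varphi_X:\mathbb R^*\acts X\to\mathbb R^*]$ generate $K_0(\AS_\mon^m)$ (resp.\ $K_0(\AS_\mon^n)$) as abelian groups, any $K_0(\AS)$-bilinear extension of the prescribed formula is unique. Before anything else, I would verify that the right-hand side is a legitimate element of $K_0(\AS_\mon^{mn})$: the sets $X\times Y\setminus(\varphi_X+\varphi_Y)^{-1}(0)$ and $(\varphi_X+\varphi_Y)^{-1}(0)\times\mathbb R^*$ are $\AS$ since $\AS$ is a boolean algebra and the fibers of semialgebraic arc-symmetric functions are $\AS$; the graph of $\acts_\neq$ (resp.\ $\acts_=$) is $\AS$ because it is obtained from the $\AS$ graphs of the original actions by the polynomial substitutions $\lambda\mapsto\lambda^n,\lambda^m$; and $\varphi_X+\varphi_Y$ is homogeneous of degree $mn$ with respect to $\acts_\neq$ because $\varphi_X(\lambda^n\cdot x)=\lambda^{mn}\varphi_X(x)$ and $\varphi_Y(\lambda^m\cdot y)=\lambda^{mn}\varphi_Y(y)$, while $\pr_2$ is homogeneous of degree $mn$ with respect to $\acts_=$ by construction.

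Next I would check that the formula descends across the defining relations of $K_0(\AS_\mon^m)$ (and symmetrically of $K_0(\AS_\mon^n)$), thereby producing a well-defined map from the free abelian group on pairs of symbols to $K_0(\AS_\mon^{mn})$. Relation (i): an $\mathbb R^*$-equivariant $\AS$-bijection $h:X\to X'$ with $\varphi_X=\varphi_{X'}\circ h$ induces the $\AS$-bijection $h\times\id_Y$, which is equivariant for $\acts_\neq$ and respects the partition into $(\varphi_X+\varphi_Y)^{-1}(0)$ and its complement; hence the two sides of the convolution agree under relation (i) for $K_0(\AS_\mon^{mn})$. Relation (ii): if $X'\subset X$ is a closed $\mathbb R^*$-invariant $\AS$-subset, then $X\times Y=(X\setminus X')\times Y\sqcup X'\times Y$ is $\acts_\neq$-invariant, and this splitting is compatible with the decomposition with respect to the vanishing locus of $\varphi_X+\varphi_Y$, so additivity is preserved. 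The $K_0(\AS)$-bilinearity, i.e.\ compatibility with multiplication by a class $[A]\in K_0(\AS)$, is obtained similarly by distributing $A$ over the cartesian product and the vanishing-locus decomposition.

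The main obstacle is relation (iii). Take $\psi=\varphi_Y\pr_Y:Y\times\mathbb R^p\to\mathbb R^*$ with two liftings $\sigma,\sigma'$ of the original action $\tau$ on $Y$, and convolve with $[\varphi_Z:\mathbb R^*\acts Z\to\mathbb R^*]$. The resulting classes live on $((Y\times\mathbb R^p)\times Z)\setminus(\psi+\varphi_Z)^{-1}(0)$ and $(\psi+\varphi_Z)^{-1}(0)\times\mathbb R^*$, with actions deduced from $\sigma$ (resp.\ $\sigma'$) on the $\mathbb R^p$-factor via $\lambda\mapsto\lambda^n$. Both actions on the $\mathbb R^p$-factor are still liftings of the corresponding action on the $Y\times Z$-factor, because the weighting by $\lambda^n$ commutes with the projection $\pr_Y$ that $\psi$ factors through; thus the two resulting classes are identified by relation (iii) applied inside $K_0(\AS_\mon^{mn})$. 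Once this check is made, the same argument on the second factor establishes bilinearity and we obtain the desired convolution product.
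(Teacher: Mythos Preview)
The paper does not prove this proposition; it appears in the Recollection section and is simply cited from \cite[Notation 6.1]{JBC2} without argument. Your sketch is a correct outline of how one verifies well-definedness: you check that the right-hand side is a legitimate symbol in $K_0(\AS_\mon^{mn})$, that the assignment respects relations (i)--(iii) and the $K_0(\AS)$-module structure, and that uniqueness is forced by bilinearity on generators. The only delicate point is relation (iii), and your handling is right: since $\psi=\varphi_Y\circ\pr_Y$ ignores the $\mathbb R^p$-coordinate, the vanishing locus $(\psi+\varphi_Z)^{-1}(0)$ factors as $(\varphi_Y+\varphi_Z)^{-1}(0)\times\mathbb R^p$, so after convolution the $\mathbb R^p$-factor remains an honest trivial factor and the two lifted actions are again identified by relation (iii) in $K_0(\AS_\mon^{mn})$.
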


\begin{rem}
This induces a convolution product $*:\M\times\M\rightarrow\M$ which is $\M_\AS$-bilinear, commutative, associative and whose unit is $\mathbb 1$.
\end{rem}

\begin{thm}[The convolution formula {\cite[Theorem 6.15]{JBC2}}]\label{thm:convolutionf}
Let $f_i:(\mathbb R^{d_i},0)\rightarrow(\mathbb R,0)$ be a Nash function germ for $i=1,2$. Define $f_1\oplus f_2:(\mathbb R^{d_1+d_2},0)\rightarrow(\mathbb R,0)$ by $f_1\oplus f_2(x_1,x_2)=f_1(x_1)+f_2(x_2)$. Then $$\tilde Z_{f_1\oplus f_2}(T)=-\tilde Z_{f_1}(T)\circledast\tilde Z_{f_2}(T)$$ where $\circledast$ consists in applying $*$ coefficientwise.
\end{thm}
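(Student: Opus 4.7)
The plan is to verify the identity coefficient-wise in $T^n$ by decomposing the jet space $\X_n(f_1\oplus f_2)$ according to the orders $a=\ord(f_1\circ\gamma_1)$ and $b=\ord(f_2\circ\gamma_2)$ of an arc $\gamma=(\gamma_1,\gamma_2)$. The requirement $\ord((f_1\oplus f_2)\circ\gamma)=n$ with nonzero leading coefficient forces one of three regimes: (I) the \emph{balanced} regime $a=b=n$ with $\ac_{f_1}^n(\gamma_1)+\ac_{f_2}^n(\gamma_2)\neq 0$; (II) the \emph{unbalanced} regime where one of $a,b$ equals $n$ while the other exceeds $n$; and (III) the \emph{early cancellation} regime $a=b=k<n$, where the leading terms of $f_1(\gamma_1)$ and $f_2(\gamma_2)$ cancel and the sum $f_1(\gamma_1)+f_2(\gamma_2)$ has order exactly $n$.

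I would first identify the balanced regime (I) with the fiber product of $[\ac_{f_1}^n:\mathbb R^*\acts\X_n(f_1)\to\mathbb R^*]$ and $[\ac_{f_2}^n:\mathbb R^*\acts\X_n(f_2)\to\mathbb R^*]$ restricted to the open locus where the two angular components do not sum to zero; this matches, up to the overall sign in $-\tilde Z_{f_1}\circledast\tilde Z_{f_2}$, the first summand in the definition of $*$. For the unbalanced regime (II), the factor with $\ord>n$ does not affect the leading coefficient of the sum, so its contribution takes the form of a naïve $\AS$-class tensored against the symbol coming from the other factor; these are exactly the terms isolated by $Z_f^{\naive}$. The correction $-\frac{\mathbb 1-Z_f^{\naive}(T)}{\mathbb 1-T}+\mathbb 1$ built into $\tilde Z_f$ is engineered precisely so that, after applying $\circledast$, the unbalanced cross-contributions from both sides mutually cancel.

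For the early cancellation regime (III), I would, for each $k<n$, realize the stratum as an $\AS$-affine bundle over the locus $(\ac_{f_1}^k+\ac_{f_2}^k)^{-1}(0)\subset\X_k(f_1)\times_{\mathbb R^*}\X_k(f_2)$, with fiber parameterizing the remaining coefficients of $(\gamma_1,\gamma_2)$ subject to a single inhomogeneous linear condition producing the nonzero leading coefficient at order $n$. Its equivariant class has the shape of an $\mathbb L$-power times $[\pr_2:\mathbb R^*\acts(\ac_{f_1}^k+\ac_{f_2}^k)^{-1}(0)\times\mathbb R^*\to\mathbb R^*]$, echoing the second summand of the definition of $*$. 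Summing over $k$ and cross-matching with the naïve-correction terms hidden inside $(\tilde Z_{f_1})_n$ and $(\tilde Z_{f_2})_n$ completes the identification, with signs matching the $-1$ in front of the convolution.

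The hard part will be regime (III): one must verify that the intermediate-coefficient bundle is genuinely an $\AS$-affine bundle of the expected $\mathbb L$-power and that its induced $\mathbb R^*$-action is, by relation (iii) of $K_0(\AS_\mon^n)$, independent of the choice of lift. Reconciling the asymmetric weights $\lambda\cdot_{\neq}(x,y)=(\lambda^n x,\lambda^m y)$ and $\lambda\cdot_=(x,y,r)=(\lambda^n x,\lambda^m y,\lambda^{mn}r)$ appearing in the definition of $*$ with the natural $\mathbb R^*$-action on polynomial arcs, and then passing through the colimit $K_0(\AS_\mon)=\varinjlim K_0(\AS_\mon^n)$ used to identify symbols of different weights, is where the algebraic bookkeeping becomes delicate. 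Once settled, the three regime-contributions assemble into $-\tilde Z_{f_1}\circledast\tilde Z_{f_2}$ and the claimed identity follows.
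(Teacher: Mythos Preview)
The paper does not prove this theorem: it is stated in the \emph{Recollection} section and cited verbatim from \cite[Theorem~6.15]{JBC2}, so there is no proof here to compare against. Your outline is broadly in the spirit of how Thom--Sebastiani-type convolution formulae are established in the motivic setting (Denef--Loeser, Guibert--Loeser--Merle, and the cited \cite{JBC2}), namely by stratifying the arc space of $f_1\oplus f_2$ according to the orders of $f_1\circ\gamma_1$ and $f_2\circ\gamma_2$.

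That said, your description of regime (III) is too optimistic and, as stated, would not go through. When $a=b=k<n$ and the leading terms cancel, the condition that $\ord\bigl((f_1\oplus f_2)\circ\gamma\bigr)=n$ is \emph{not} a single inhomogeneous linear condition on the remaining jet coefficients: it requires the vanishing of the coefficients at all intermediate orders $k+1,\ldots,n-1$ and nonvanishing at order $n$, and these conditions are in general nonlinear in the arc coefficients (they involve higher Taylor coefficients of $f_1,f_2$). The stratum is therefore not obviously an $\AS$-affine bundle of the claimed rank over $(\ac_{f_1}^k+\ac_{f_2}^k)^{-1}(0)$. In the actual proofs (see \cite{GLM} and \cite[\S6]{JBC2}), this difficulty is circumvented not by a direct fibration argument but by the very design of the modified zeta function $\tilde Z_f$: one first establishes a formula for $Z_{f_1\oplus f_2}$ involving mixed terms in $Z_{f_i}$ and $Z_{f_i}^{\naive}$ (coming from your regimes (I) and (II)), and the passage from $Z$ to $\tilde Z$ is then an algebraic manipulation in $\M\llbracket T\rrbracket$ that absorbs the cross terms and yields the clean convolution identity. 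Regime (III) is never analyzed as a standalone geometric stratum; rather, its contribution is implicitly accounted for by the generating-series identity relating $Z_f$, $Z_f^{\naive}$, and $\tilde Z_f$. Your plan to ``sum over $k$ and cross-match'' is the right instinct, but the matching happens at the level of formal power series, not via an explicit bundle description of each cancellation stratum.
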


\section{Statement of the main theorem}
\begin{defn}
A polynomial $f\in\mathbb R[x_1,\ldots,x_d]$ is said to be a Brieskorn polynomial if it is of the following form $$f(x)=\sum_{i=1}^d\varepsilon_ix_i^{k_i},\,\varepsilon_i\neq0,\,k_i\ge1$$
\end{defn}

\begin{notation}\label{not:BrieskSimpl}
Since we are only interested in the arc-analytic classification of Brieskorn polynomials, we can perform the following easy simplifications:
\begin{itemize}[nosep]
\item Without modifying the arc-analytic type, we may replace $\varepsilon_i$ by its sign $\sgn(\varepsilon_i)$ using a linear change of variables\footnote{When $k_i$ is odd, we could also have fixed that $\varepsilon_i=1$.}.
\item Without modifying the arc-analytic type, we may reorder the variables and assume that the exponents are ordered: $$k_1\le k_2\le\cdots\le k_d$$
\item Still by reordering the variables, we may assume that if some exponents are equal, we put the positive coefficients first:
$$k_i=k_{i+1}=\cdots=k_{i+m}\Rightarrow\varepsilon_i\ge\varepsilon_{i+1}\ge\cdots\ge\varepsilon_{i+m}$$
\item A Brieskorn polynomial $f$ is non-singular if and only if there exists $i$ such that $k_i=1$ if and only if $\tilde Z_f(T)=0$ if and only if $Z_f(T)=\sum_{n}\mathbb L^{-n}T^n$.

Indeed, assume that $k_1=1$ then by applying the Nash inverse mapping theorem to $(x_1,\ldots,x_d)\mapsto(f(x_1,\ldots,x_d),x_2,\ldots,x_d)$ we get that $f$ is arc-analytically equivalent to $x_1$. In this case, we have that $\tilde Z_f(T)=0$ and we already know, by \cite[Proposition 8.3]{JBC2}, that if $\forall i,k_i\ge2$ then $\tilde Z_f(T)$ is non-zero.

We have just proved the following result. Assume that two Brieskorn polynomials $f$ and $g$ are arc-analytically equivalent. If $f$ is non-singular then $g$ is also non-singular.

From now on, we elude the non-singular case by assuming that $k_1\ge 2$.
\end{itemize}
\end{notation}

\begin{thm}[Main theorem]\label{thm:Main}
Consider two Brieskorn polynomials $$f(x)=\sum_{i=1}^d\varepsilon_ix_i^{k_i}\quad\quad\quad\text{ and }\quad\quad\quad g(x)=\sum_{i=1}^d\eta_ix_i^{l_i}$$ satisfying the following conditions:
\begin{enumerate}[nosep,label=(\alph*)]
\item $\varepsilon_i,\eta_i\in\{\pm1\}$
\item $2\le k_1\le\cdots\le k_d$ and $2\le l_1\le\cdots\le l_d$
\item $k_i=k_{i+1}=\cdots=k_{i+m}\Rightarrow\varepsilon_i\ge\cdots\ge\varepsilon_{i+m}$ and $l_i=l_{i+1}=\cdots=l_{i+m}\Rightarrow\eta_i\ge\cdots\ge\eta_{i+m}$
\end{enumerate}
Then the following are equivalent:
\begin{enumerate}[nosep,label=(\arabic*),ref=\ref{thm:Main}.(\arabic*)]
\item $f$ and $g$ are arc-analytically equivalent\label{item:fgAAE}
\item $Z_f(T)=Z_g(T)$\label{item:fgZETAsame}
\item\label{item:fgCONJ}
\begin{enumerate}[nosep,label=(\roman*),ref=\ref{item:fgCONJ}.(\roman*)]
\item $\forall i,\,k_i=l_i$ \label{item:fgSAMEexp}
\item For $j$ such that $k_j$ is even and not multiple of an odd exponent $k_m$, we have $\varepsilon_j=\eta_j$ \label{item:fgSAMEcoef}
\end{enumerate}
\end{enumerate}
\end{thm}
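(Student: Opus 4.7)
The proof proceeds by establishing the three implications \ref{item:fgAAE}~$\Rightarrow$~\ref{item:fgZETAsame}~$\Rightarrow$~\ref{item:fgCONJ}~$\Rightarrow$~\ref{item:fgAAE}. The first is immediate from Theorem~\ref{thm:Zaainv}, which precisely records that $Z_f(T)$ is an arc-analytic invariant. The bulk of the work concentrates in \ref{item:fgZETAsame}~$\Rightarrow$~\ref{item:fgCONJ}; the reverse direction amounts to exhibiting explicit equivalences realizing the sign freedoms allowed by~\ref{item:fgSAMEcoef}.

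For \ref{item:fgCONJ}~$\Rightarrow$~\ref{item:fgAAE}, assume that $k_i=l_i$ for all $i$. Any discrepancy between the tuples $(\varepsilon_i)$ and $(\eta_i)$ occurs at an index $j$ where either $k_j$ is odd or $k_j$ is even and divisible by some odd exponent $k_m$. In the odd case the linear substitution $x_j\mapsto -x_j$ flips $\varepsilon_j$ without touching the other monomials. In the second case I invoke the two-variable statement $x_m^{k_m}+\varepsilon_j x_j^{k_j}\sim x_m^{k_m}-\varepsilon_j x_j^{k_j}$ for odd $k_m$ (Proposition~\ref{prop:KPgen}, announced in the introduction), extended to the full Brieskorn polynomial by the identity on the remaining coordinates; the product homeomorphism is semialgebraic, arc-analytic, and has Jacobian bounded below, hence is an arc-analytic equivalence. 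A finite chain of such sign-flips connects $f$ to $g$.

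For \ref{item:fgZETAsame}~$\Rightarrow$~\ref{item:fgCONJ}, the exponent statement~\ref{item:fgSAMEexp} is essentially \cite[Corollary 8.4]{JBC2}, and can be read directly off the successive lowest-degree coefficients of $Z_f(T)$ using that $\X_n(\varepsilon x^k)$ is empty unless $k\mid n$. The new ingredient is~\ref{item:fgSAMEcoef}. My plan is to start from the convolution formula (Theorem~\ref{thm:convolutionf}), which factors $\tilde Z_f(T)$ as an iterated $\circledast$-convolution of the one-variable zeta functions $\tilde Z_{\varepsilon_i x_i^{k_i}}(T)$, and then apply the ring morphisms $F^+,F^-:\M\to\M_\AS$ and the virtual Poincaré polynomial $\beta$ coefficient by coefficient. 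Since $F^\pm$ respect the product in $\M$, the resulting coefficients factor as products of virtual Poincaré polynomials of the positive and negative fibers of the monomials $\varepsilon_ix_i^{k_i}$, for which the last section of the paper provides explicit formulae. To isolate a prescribed $\varepsilon_j$ at an even $k_j$ not divisible by any odd $k_m$, I pick a degree $n$ for which the difference $\beta\circ F^+-\beta\circ F^-$ applied to the $T^n$-coefficient of $\tilde Z_f(T)$ is driven entirely by the $j$-th factor: factors coming from an odd exponent, and factors coming from an even exponent divisible by an odd $k_m$, contribute symmetrically in $F^+$ versus $F^-$ by the same sign-flip mechanism as above.

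The main difficulty is the combinatorial bookkeeping of this last step. For every target index $j$ one has to produce a specific $n$, calibrated to the exponent vector (most naturally built from the least common multiple of the $k_i$'s and from $k_j$ itself), such that in the convolution decomposition the factor coming from the $j$-th variable does not vanish and the remaining factors combine so as not to swallow the sign asymmetry in $F^+-F^-$. Establishing the required non-cancellation is where the explicit virtual Poincaré polynomial formulae of the fibers of single monomials $\varepsilon x^k$ become indispensable, and the argument will plausibly proceed by induction on the number of variables, calibrating $n$ at each step to the smallest even exponent that is not a multiple of any odd one among the remaining $k_i$'s.
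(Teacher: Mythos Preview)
Your outline for \ref{item:fgAAE}~$\Rightarrow$~\ref{item:fgZETAsame}, for \ref{item:fgSAMEexp}, and for \ref{item:fgCONJ}~$\Rightarrow$~\ref{item:fgAAE} is fine; the latter is a minor variant of the paper's use of Proposition~\ref{prop:KPgen} (you invoke only the two-variable case and extend by the identity, which is legitimate).

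The gap is in \ref{item:fgZETAsame}~$\Rightarrow$~\ref{item:fgSAMEcoef}. Your key step reads: ``Since $F^\pm$ respect the product in $\M$, the resulting coefficients factor as products of virtual Poincar\'e polynomials of the positive and negative fibers of the monomials $\varepsilon_ix_i^{k_i}$.'' This conflates two distinct operations. The product respected by the ring morphisms $F^\pm$ is the \emph{fiber product over $\mathbb R^*$}, whereas the convolution formula uses the \emph{convolution product} $*$, built from the sum map $\varphi_X+\varphi_Y$. These are different, and $F^\pm$ does \emph{not} send $*$ to the ordinary product: already for $a=[x^2:\mathbb R^*\to\mathbb R^*]$ one computes $\beta F^+(a)=2$ but $\beta F^+(a*a)=-\beta(S^1)+\beta(\pt)=-u$, not $4$. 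So the coefficients of $\tilde Z_f$ do not factor in the way you describe, and no amount of ``combinatorial bookkeeping'' on single-variable fibers will salvage the argument.

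What the paper actually does is different in an essential way. Fix $k\in K$ and look at the coefficient $a_k$ of $T^k$ in $\tilde Z_f$. Expanding the convolution of the factors $\mathbb 1-[\varepsilon_ix_i^{k_i}]$ gives an alternating sum over subsets $I\subset\{i:k_i\mid k\}$; after applying $\beta\circ\overline{\,\cdot\,}$ or $\beta\circ F^\pm$ and using \emph{additivity} of $\beta$, this sum telescopes by inclusion--exclusion to the virtual Poincar\'e polynomial of a single \emph{multi-variable} fiber $\beta\bigl(\sum_{k_i\mid k}\varepsilon_ix_i^{k_i}=\varepsilon\bigr)$ (Lemma~\ref{lem:recovBeta}). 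The decisive input is then not the one-variable fiber formulae you cite, but the closed formulae of Theorem~\ref{thm:vppfibers} for the fibers of such multi-variable sums of even powers: the degree and sign of the leading term of $\beta(\{f=1\})-u^{s-1}$ encode $\sigma^-=\sum_{k'\mid k}\sigma_{k'}^-$, from which the individual $\sigma_{k}^-$ are recovered by induction along $K$. Your proposal is missing both the telescoping step (Lemma~\ref{lem:recovBeta}) and the multi-variable fiber computation (Theorem~\ref{thm:vppfibers}); without them the sign-recovery argument does not go through.
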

The previous statement was conjectured in \cite[Conjecture 1.10.1]{JBCThese}. It is compatible with the classifications of Brieskorn polynomials in two and three variables respectively by Koike--Parusiński \cite{KP03} and Fichou \cite{Fic05}.

\begin{rem}
We deduce from Theorem \ref{thm:Main} and the remarks in Notations \ref{not:BrieskSimpl} that the motivic zeta function defined in Definition \ref{defn:ZetaFun} is a complete arc-analytic invariant for the Brieskorn polynomials.
\end{rem}

\section{Proof of the main theorem}
\subsection{What is currently known}
The implication \ref{item:fgAAE}$\Rightarrow$\ref{item:fgZETAsame} derives from Theorem \ref{thm:Zaainv}. We already know that \ref{item:fgZETAsame}$\Rightarrow$\ref{item:fgSAMEexp} by \cite[Corollary 8.4]{JBC2}. The implication \ref{item:fgCONJ}$\Rightarrow$\ref{item:fgAAE} may be derived from the following proposition.
\begin{prop}[{\cite[Lemme 1.10.2]{JBCThese}}]\label{prop:KPgen}
Let $p\in\mathbb N_{>0}\setminus(2\mathbb N_{>0})$, $k\in\mathbb N_{>0}$, $m_1,\ldots,m_k\in\mathbb N_{>0}$ and $\varepsilon_1,\ldots,\varepsilon_k\in\{-1,1\}$. Then the polynomials $$f(x,y)=x^p+\sum_{i=1}^k\varepsilon_iy_i^{m_ip}\quad\quad\quad\text{ and }\quad\quad\quad g(x,y)=x^p+\sum_{i=1}^ky_i^{m_ip}$$ are arc-analytically equivalent.
\end{prop}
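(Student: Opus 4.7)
The plan is to apply Theorem \ref{thm:TFWeighted} by constructing a Nash family of weighted homogeneous polynomials with isolated singularities, with fixed weight system, connecting $f$ to $g$. Since arc-analytic equivalence is transitive, it is enough to show that flipping a single sign $\varepsilon_j$ (from $-1$ to $+1$) does not change the arc-analytic class; iterating over the finitely many $j$ with $\varepsilon_j=-1$ then yields the full statement. The trivial case $p=1$ is handled directly by the Nash inverse function theorem since $f$ and $g$ are both non-singular.

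Fix an index $j$ with $\varepsilon_j=-1$ and consider the one-parameter family
$$F_t(x,y_1,\ldots,y_k)=x^p+t(1-t)\,x\,y_j^{m_j(p-1)}+\varepsilon_j(1-2t)\,y_j^{m_jp}+\sum_{i\neq j}\varepsilon_iy_i^{m_ip},\qquad t\in[0,1].$$
Then $F_0=f$ and $F_1$ equals $f$ with $\varepsilon_j$ replaced by $-\varepsilon_j$. The family is polynomial, hence Nash, in $(x,y,t)$, and every $F_t$ is weighted homogeneous for the common weight system $w_x=\lcm(m_1,\ldots,m_k)$, $w_{y_i}=w_x/m_i$; the perturbation monomial $x\,y_j^{m_j(p-1)}$ is precisely the one of weight $pw_x$ whose role is to preserve the isolated singularity at $t=1/2$, where the coefficient of $y_j^{m_jp}$ vanishes.

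The main technical task is to verify isolated singularity at the origin for every $t\in[0,1]$. The critical-point equations decouple into the $(x,y_j)$-block and the remaining $y_i$ ($i\neq j$), the latter contributing only $y_i=0$ since $\varepsilon_i\neq 0$. Direct checks dispose of the axes $\{y_j=0\}$ and $\{x=0\}$. On the open set $\{x\neq 0,\,y_j\neq 0\}$, the substitution $u=y_j^{m_j}/x$ combined with the weighted Euler relation shows that critical points correspond exactly to multiple real roots of
$$P_t(u)=1+t(1-t)\,u^{p-1}+\varepsilon_j(1-2t)\,u^p.$$
It then remains to show that $P_t$ has no multiple real root for any $t\in[0,1]$. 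The endpoints $t\in\{0,1\}$ and the midpoint $t=1/2$ are immediate. For $t\in(0,1)\setminus\{1/2\}$, one solves $P_t'(u)=0$ uniquely for $u^*\neq 0$ and substitutes back into $P_t$; the key sign inequality relies crucially on $p$ being odd, so that $p-1$ is even and $(1-2t)^{p-1}\geq 0$, which forces $P_t(u^*)$ to be bounded away from $0$.

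Once isolated singularity is established for all $t$, Theorem \ref{thm:TFWeighted} yields that $F_0=f$ and $F_1$ are arc-analytically equivalent, and iteration concludes the proof. The main obstacle is precisely the discriminant-avoidance computation of the previous paragraph: the naive linear interpolation between $f$ and its sign-flipped version would pass through a polynomial independent of $y_j$ at $t=1/2$, destroying the isolated singularity hypothesis, and it is the combination of the ad hoc perturbation together with the parity of $p$ that makes everything work.
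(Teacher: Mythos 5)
Your proposal is correct and uses exactly the same strategy as the paper: apply Theorem \ref{thm:TFWeighted} to a one-parameter weighted-homogeneous family interpolating between the two Brieskorn polynomials, with the perturbation monomial $xy_j^{m_j(p-1)}$ inserted to keep the singularity isolated when the coefficient of $y_j^{m_jp}$ passes through zero. The only organizational difference is that the paper flips all the negative signs at once (via the chain $f=l_0\sim h_{-1}\sim h_1\sim g$), whereas you flip one sign at a time and iterate.

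Where your write-up diverges from the paper, and where you could simplify considerably, is in the verification of isolated singularity. The paper observes that
$$\tfrac{1}{p}\partial_x h_s=x^{p-1}+\sum_i y_i^{m_i(p-1)}$$
is a sum of squares (since $p-1$ is even), hence vanishes only at the origin; this makes every critical point of $h_s$ sit at $0$ with no further work. The very same shortcut applies to your family: for $t\in[0,1]$,
$$\partial_x F_t=px^{p-1}+t(1-t)\,y_j^{m_j(p-1)}$$
is a sum of two nonnegative terms (both exponents even), and for $t\in(0,1)$ it vanishes only when $x=y_j=0$; the remaining equations $\partial_{y_i}F_t=0$ for $i\ne j$ then force $y_i=0$, and the endpoint cases $t\in\{0,1\}$ are trivial. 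So the reduction to multiple real roots of $P_t(u)=1+t(1-t)u^{p-1}+\varepsilon_j(1-2t)u^p$ and the discriminant computation, while correct as sketched (one does find $P_t(u^*)=1+\tfrac{t(1-t)}{p}(u^*)^{p-1}>0$ at the unique nonzero critical point $u^*$ of $P_t$, using that $p-1$ is even), is an unnecessary detour. Spotting the sum-of-squares structure in $\partial_x F_t$ brings your argument in line with the paper's and eliminates the longest step.
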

\begin{proof}
If $p=1$, we may conclude as for the last item of Notations \ref{not:BrieskSimpl}. Hence, we assume that $p\ge3$.

The following proof is inspired from \cite[p.2095]{KP03}.

Let $$h_s(x,y)=x^p+\sum_{i,\,\varepsilon_i=-1}\left(pxy_i^{m_i(p-1)}+sy_i^{m_ip}\right)+\sum_{i,\,\varepsilon_i=1}\left(pxy_i^{m_i(p-1)}+y_i^{m_ip}\right),\,s\in[-1,1]$$
Let $\alpha=\lcm(m_i)$ and define $\mu_i$ by $\alpha=m_i\mu_i$. For $s\in[-1,1]$, $h_s$ is weighted homogeneous with weights $(\alpha,\mu_1,\ldots,\mu_k)$ and has an isolated singularity at the origin. Indeed, $$\frac{1}{p}\partial_xh_s=x^{p-1}+\sum_{i=1}^ky_i^{m_i(p-1)}$$ is a sum of squares since $p-1$ is even.

Then, by Theorem \ref{thm:TFWeighted}, $h_{-1}$ and $h_1$ are arc-analytically equivalent.

Let $$l_s(x,y)=x^p+\sum_{i=1}^k\left(psxy_i^{m_i(p-1)}+\varepsilon_iy_i^{m_ip}\right),\,s\in[0,1]$$
Then, again by Theorem \ref{thm:TFWeighted}, $f=l_{0}$ and $h_{-1}=l_1$ are arc-analytically equivalent.

In the same way, we show that $h_{1}$ and $g$ are arc-analytically equivalent.

Hence we have $g\sim h_1$, $h_1\sim h_{-1}$ and $h_{-1}\sim f$, thus $f$ and $g$ are arc-analytically equivalent.
\end{proof}

Hence it remains to prove that \ref{item:fgZETAsame}$\Rightarrow$\ref{item:fgSAMEcoef}.

\subsection{Essence of the proof}
\begin{lemma}[{\cite[Example 6.10]{JBC2}}]\label{lem:OneMonomial}
Let $\varepsilon\in\{\pm1\}$ and $k\in\mathbb N_{>0}$. Then
\begin{align*}
\tilde Z_{\varepsilon x^{k}}(T)&=-T-\cdots-T^{k-1} \\
&\quad-\left(\mathbb 1-\left[\varepsilon x^k:\mathbb R^*\acts\mathbb R^*\rightarrow\mathbb R^*\right]\right)\mathbb L^{-1}T^{k}-\mathbb L^{-1}T^{k+1}-\ldots-\mathbb L^{-1}T^{2k-1} \\
&\quad-\left(\mathbb 1-\left[\varepsilon x^k:\mathbb R^*\acts\mathbb R^*\rightarrow\mathbb R^*\right]\right)\mathbb L^{-2}T^{2k}-\mathbb L^{-2}T^{2k+1}-\ldots-\mathbb L^{-2}T^{3k-1} \\
&\quad-\cdots \\
&=\sum_{m\ge1}\left[\varepsilon x^k:\mathbb R^*\acts\mathbb R^*\rightarrow\mathbb R^*\right]\mathbb L^{-m}T^{mk}-\sum_{n\ge1}\mathbb L^{-\left\lfloor\frac{n}{k}\right\rfloor}T^n
\end{align*}
where the action is given by $\lambda\cdot x=\lambda x$.
\end{lemma}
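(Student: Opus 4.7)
My plan is to compute $Z_{\varepsilon x^k}(T)$ directly from Definition \ref{defn:ZetaFun} and then derive $\tilde Z_{\varepsilon x^k}(T)$ from its defining formula, reading off both claimed expressions from the resulting rational function. The first step is to describe the arc space. For $\gamma(t)=a_1t+\cdots+a_nt^n$, one has $f(\gamma(t))=\varepsilon\gamma(t)^k$, so $\ord(f\circ\gamma)=k\cdot\ord(\gamma)$. Consequently, $\X_n(f)$ is empty unless $k\mid n$; when $n=mk$, the set $\X_{mk}(f)$ consists exactly of the arcs with $a_1=\cdots=a_{m-1}=0$, $a_m\neq 0$ and $a_{m+1},\ldots,a_{mk}$ arbitrary, and is thus identified with $\mathbb R^*\times\mathbb R^{m(k-1)}$, while $\ac_f^{mk}(\gamma)=\varepsilon a_m^k$.

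Next I simplify the symbol. The $\mathbb R^*$-action induced by reparametrization is $a_i\mapsto\lambda^ia_i$, but since $\ac_f^{mk}$ depends only on $a_m$, the action obtained by leaving the $\mathbb R^{m(k-1)}$-factor fixed is another lifting of the same action $\lambda\cdot c=\lambda^{mk}c$ on the target. Relation (iii) in the definition of $K_0(\AS_\mon^{mk})$ identifies the two symbols, and combining this with the $K_0(\AS)$-algebra structure yields
\[[\ac_f^{mk}:\mathbb R^*\acts\X_{mk}(f)\to\mathbb R^*]=\mathbb L_\AS^{m(k-1)}[\varepsilon x^k:\mathbb R^*\acts\mathbb R^*\to\mathbb R^*].\]
Multiplying by $\mathbb L^{-mk}T^{mk}$ and using $\mathbb L=\mathbb L_\AS\mathbb 1$ to absorb the factor $\mathbb L_\AS^{m(k-1)}$ into the exponent, I obtain the compact form
\[Z_f(T)=\sum_{m\ge 1}[\varepsilon x^k:\mathbb R^*\acts\mathbb R^*\to\mathbb R^*]\,\mathbb L^{-m}T^{mk}.\]

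Now I turn to $\tilde Z_f(T)$. The forgetful morphism sends $[\varepsilon x^k:\mathbb R^*\acts\mathbb R^*\to\mathbb R^*]$ to $[\mathbb R^*]=\mathbb L_\AS-1$, so $Z_f^\naive(T)=(\mathbb L-\mathbb 1)\sum_{m\ge 1}\mathbb L^{-m}T^{mk}$. Summing the geometric series gives $\mathbb 1-Z_f^\naive(T)=(\mathbb 1-T^k)/(\mathbb 1-\mathbb L^{-1}T^k)$, which is divisible by $\mathbb 1-T$. A short simplification from the defining formula of $\tilde Z_f(T)$ then yields
\[\tilde Z_f(T)=\frac{-(T+T^2+\cdots+T^{k-1})-(\mathbb 1-[\varepsilon x^k:\mathbb R^*\acts\mathbb R^*\to\mathbb R^*])\mathbb L^{-1}T^k}{\mathbb 1-\mathbb L^{-1}T^k}.\]
Expanding the denominator as $\sum_{j\ge 0}\mathbb L^{-j}T^{jk}$ directly produces the first displayed formula in the statement; regrouping the $[\varepsilon x^k]\mathbb L^{-m}T^{mk}$ contributions (which arise only at multiples of $k$) separately from the uniform $-\mathbb L^{-\lfloor n/k\rfloor}T^n$ contributions (which arise at every $n\ge 1$) gives the compact second formula.

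The only non-mechanical step is the simplification of the action via relation (iii): one must check that the $\mathbb R^*$-action on the auxiliary factor $\mathbb R^{m(k-1)}$ really is parasitic and can be trivialized while keeping a lifting of the action on the target $\mathbb R^*$. Once this is granted, everything else is a bookkeeping exercise with geometric series in $\M\llbracket T\rrbracket$.
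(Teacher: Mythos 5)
Your proof is correct and takes the natural, direct route that the cited reference (Example 6.10 of \cite{JBC2}) takes: parametrize $\X_{mk}(\varepsilon x^k)$ explicitly, use relation (iii) of $K_0(\AS_\mon^{mk})$ together with the $K_0(\AS)$-algebra relation to collapse the symbol to $\mathbb L_\AS^{m(k-1)}[\varepsilon x^k:\mathbb R^*\acts\mathbb R^*\to\mathbb R^*]$, sum the geometric series, and feed the result into the defining formula of $\tilde Z_f$. The identification of the original action $\lambda\cdot a_i=\lambda^ia_i$ on $\mathbb R^*\times\mathbb R^{m(k-1)}$ with the lifting that is trivial on the $\mathbb R^{m(k-1)}$ factor is exactly the intended use of relation (iii), and the passage to the direct limit correctly matches the action $\lambda\cdot a_m=\lambda^m a_m$ in $K_0(\AS_\mon^{mk})$ with the action $\lambda\cdot x=\lambda x$ in $K_0(\AS_\mon^k)$ appearing in the statement. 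The closed-form computation of $\mathbb 1-Z_f^\naive(T)=(\mathbb 1-T^k)/(\mathbb 1-\mathbb L^{-1}T^k)$ and its division by $\mathbb 1-T$ is clean and reproduces both displayed expressions.
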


\begin{rem}
Notice that if $k$ is odd then $\mathbb 1-\left[\varepsilon x^k:\mathbb R^*\acts\mathbb R^*\rightarrow\mathbb R^*\right]=0$. Indeed, we have the following commutative diagram $$\xymatrix{\mathbb R^* \ar[rd]_{\varepsilon x^k} \ar[rr]^{\varepsilon x^k}_{\simeq,\AS} && \mathbb R^* \ar[ld]^{\id} \\ &\mathbb R^*&}$$ where the action on the right side is given by $\lambda\cdot x=\lambda^kx$.
\end{rem}

The rest of this section is devoted to the proof of \ref{item:fgZETAsame}$\Rightarrow$\ref{item:fgSAMEcoef}. For this purpose, we are going to prove that if $f$ is a singular Brieskorn polynomial, then for each monomial appearing in the expansion of $f$ with an even degree not multiple of an odd exponent, we are able to recover the sign of its coefficient from $Z_f(T)$. \\

We first fix some notation. Let $$f(x)=\sum_{i=1}^d\varepsilon_ix_i^{k_i}$$ be a Brieskorn polynomial as in the statement of Theorem \ref{thm:Main}. \\

We denote by $a_n$ the coefficients of $\tilde Z_f(T)$ so that $$\tilde Z_f(T)=\sum_{n\ge1}a_nT^n$$

Notice that by \cite[Proposition 8.3]{JBC2}, we already know how to express $(k_1,\ldots,k_d)$ in terms of the coefficients of $\tilde Z_f(T)$. \\

We denote by $K$ the set of even exponents which are not multiple of an odd exponent, i.e. $$K=\left\{k_i,\,\forall j\in\llbracket1,d\rrbracket,\,k_j\in(2\mathbb N+1)\Rightarrow k_j\nmid k_i\right\}$$ We assume that $K=\left\{k_{i_1},\ldots,k_{i_s}\right\}$ with $$k_{i_1}<k_{i_2}<\cdots<k_{i_s}$$ For $k\in K$ and $\varepsilon\in\{+,-\}$, we set $\sigma_k^\varepsilon=\#\left\{i,\,k_i=k,\,\varepsilon_i=\varepsilon1\right\}$, i.e. $\sigma_k^+$ (resp. $\sigma_k^-$) is the number of positive (resp. negative) coefficients of degree $k$. Our goal is to deduce these $\sigma_{k}^\varepsilon$ from $Z_f(T)$, or equivalently from $\tilde Z_f(T)$.

\begin{lemma}\label{lem:recovBeta}
Let $k\in K$. Then
$$\beta\left(x\in\mathbb R^{\{i,\,k_i|k\}},\,\sum_{i,\,k_i|k}\varepsilon_ix_i^{k_i}=0\right)=u^{\#\{i,\,k_i|k\}-1}-\beta\left(\overline{a_k}\right)u^{\sum_{i=1}^d\left\lfloor\frac{k}{k_i}\right\rfloor-1}$$
and, for $\varepsilon=+,-$,
$$\beta\left(x\in\mathbb R^{\{i,\,k_i|k\}},\,\sum_{i,\,k_i|k}\varepsilon_ix_i^{k_i}=\varepsilon1\right)=\left(u\beta\left(F^\varepsilon(a_k)\right)-\beta\left(\overline{a_k}\right)\right)u^{\sum_{i=1}^d\left\lfloor\frac{k}{k_i}\right\rfloor-1}+u^{\#\{i,\,k_i|k\}-1}$$
where, for $I\subset\{1,\ldots,d\}$, we set $\mathbb R^I=\left\{(x_1,\ldots,x_d)\in\mathbb R^d,\,\forall j\notin I,\,x_j=0\right\}$.
\end{lemma}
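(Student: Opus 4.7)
The plan is to compute $F^\varepsilon(a_k)$ and $\overline{a_k}$ in closed form via the convolution formula, and then compare with the fibers of $g=\sum_{i\in P}\varepsilon_i x_i^{k_i}$ (where $P=\{i:k_i\mid k\}$, $p=|P|$) after applying $\beta$. Inverting the two equalities claimed in the lemma, what I really have to show is
\begin{align*}
\beta(F^\varepsilon(a_k))&=u^{-\sum_i\lfloor k/k_i\rfloor}\bigl(\beta(g^{-1}(\varepsilon 1))-\beta(g^{-1}(0))\bigr),\\
\beta(\overline{a_k})&=u^{-\sum_i\lfloor k/k_i\rfloor+1}\bigl(u^{p-1}-\beta(g^{-1}(0))\bigr).
\end{align*}

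First I iterate the convolution formula (Theorem~\ref{thm:convolutionf}) to write $\tilde Z_f=(-1)^{d-1}\tilde Z_{\varepsilon_1 x_1^{k_1}}\circledast\cdots\circledast\tilde Z_{\varepsilon_d x_d^{k_d}}$, and extract the coefficient of $T^k$ using Lemma~\ref{lem:OneMonomial}. For each $i\notin P$ the relevant coefficient is $-\mathbb L^{-\lfloor k/k_i\rfloor}\mathbb 1$, and because $\mathbb 1$ is the unit of $*$, each such factor contributes only a sign and a power of $\mathbb L$. This collapses the formula to
$$a_k=(-1)^{p-1}\mathbb L^{-\sum_i\lfloor k/k_i\rfloor}\bigast_{i\in P}\bigl([\varepsilon_i x^{k_i}]-\mathbb 1\bigr),$$
and then $\M_\AS$-bilinearity expands this into a signed sum over subsets $S\subseteq P$ of pure iterated convolutions $\bigast_{i\in S}[\varepsilon_i x^{k_i}]$.

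The main technical step is then to establish, by induction on $|S|$ starting from the explicit two-symbol convolution formula of Proposition~6.1 of~\cite{JBC2}, the two closed forms
\begin{align*}
F^\varepsilon\Bigl(\bigast_{i\in S}[\varepsilon_i x^{k_i}]\Bigr)&=(-1)^{|S|+1}\bigl([g_S^{-1}(\varepsilon 1)\cap(\mathbb R^*)^S]-[g_S^{-1}(0)\cap(\mathbb R^*)^S]\bigr),\\
\overline{\bigast_{i\in S}[\varepsilon_i x^{k_i}]}&=(-1)^{|S|}\bigl(\mathbb L_\AS[g_S^{-1}(0)\cap(\mathbb R^*)^S]-(\mathbb L_\AS-1)^{|S|}\bigr),
\end{align*}
with $g_S=\sum_{i\in S}\varepsilon_i x_i^{k_i}$. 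I expect this to be the delicate part: each inductive step forces me to convolve a virtual class (which is itself a signed sum of symbols carrying their own correction pieces) with a new symbol, producing auxiliary ``$(\varphi_X+\varphi_Y)^{-1}(0)\times\mathbb R^*$''-type terms from Proposition~6.1 of~\cite{JBC2}. One must then verify that, under $F^\varepsilon$ (resp.\ $\overline{\cdot}$), these corrections telescope with those produced at the previous step, leaving only the clean ``fiber of the total sum over $\varepsilon 1$ minus fiber over $0$'' (resp.\ its $\overline{\cdot}$-analogue).

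To finish I substitute these closed forms into the $S$-expansion and exploit the stratification $\mathbb R^P=\bigsqcup_{S\subseteq P}(\mathbb R^*)^S\times\{0\}^{P\setminus S}$. Since $g$ restricts to $g_S$ on the $S$-th stratum, this gives $\sum_{S\subseteq P}[g_S^{-1}(c)\cap(\mathbb R^*)^S]=[g^{-1}(c)]$ for every $c\in\mathbb R$, together with the binomial identity $\sum_{S\subseteq P}(\mathbb L_\AS-1)^{|S|}=\mathbb L_\AS^{p}$. A short computation then yields $F^\varepsilon(a_k)=\mathbb L_\AS^{-\sum_i\lfloor k/k_i\rfloor}([g^{-1}(\varepsilon 1)]-[g^{-1}(0)])$ and $\overline{a_k}=\mathbb L_\AS^{-\sum_i\lfloor k/k_i\rfloor+1}(\mathbb L_\AS^{p-1}-[g^{-1}(0)])$; applying $\beta$ concludes.
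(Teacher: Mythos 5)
Your proposal is correct and takes essentially the same route as the paper: both pass via the convolution formula and Lemma~\ref{lem:OneMonomial} to the identity $a_k\mathbb L^{\sum_i\lfloor k/k_i\rfloor}=-\bigast_{i,\,k_i\mid k}\left(\mathbb 1-\left[\varepsilon_ix_i^{k_i}\right]\right)$, expand over subsets, replace iterated convolutions of symbols by explicit fiber descriptions, and conclude by additivity of $\beta$ over the coordinate stratification of $\mathbb R^{\{i:\,k_i\mid k\}}$ together with a binomial identity. The one place you expect a delicate telescoping induction (the closed forms for $F^\varepsilon$ and $\overline{\,\cdot\,}$ of an iterated convolution) collapses if you instead prove a single identity in $\M$: $\bigast_{i\in S}\left[\varepsilon_ix_i^{k_i}\right]$ equals $(-1)^{|S|-1}$ times the difference of the class where $\sum_{i\in S}\varepsilon_ix_i^{k_i}$ is nonzero on $(\mathbb R^*)^S$ and the class of its zero locus times $\mathbb R^*$, by a direct induction on $|S|$ (or by citing \cite[Proposition~4.8]{JBC4} as the paper does), after which both of your closed forms fall out by simply applying $F^\varepsilon$ and $\overline{\,\cdot\,}$ to the two pieces.
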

\begin{proof}
From Theorem \ref{thm:convolutionf} and Lemma \ref{lem:OneMonomial}, we get

\begin{align*}
a_k\mathbb L^{\sum_{i=1}^d\left\lfloor\frac{k}{k_i}\right\rfloor}&=-\bigast_{i,\,k_i|k}\left(\mathbb 1-\left[\varepsilon_ix_i^{k_i}:\mathbb R^*\acts\mathbb R^*\rightarrow\mathbb R^*\right]\right) \\
&=\sum_{I\subset\{i,\,k_i|k\}}(-1)^{|I|}\bigast_{i\in I}\left[\varepsilon_ix_i^{k_i}:\mathbb R^*\acts\mathbb R^*\rightarrow\mathbb R^*\right] \\
&=\sum_{I\subset\{i,\,k_i|k\}}\left(\left[x\in(\mathbb R^*)^I,\,\sum_{i\in I}\varepsilon_ix_i^{k_i}\neq0\right]-\left[(x,y)\in(\mathbb R^*)^I\times\mathbb R^*,\,\sum_{i\in I}\varepsilon_ix_i^{k_i}=0\right]\right)
\end{align*}
where $I$ is possibly empty. Notice it is also possible to deduce this formula from \cite[Proposition 4.8]{JBC4}. \\

Hence, by additivity of the virtual Poincaré polynomial,
\begin{align*}
\hspace{-1cm}\beta\left(\overline{a_k}\right)u^{\sum_{i=1}^d\left\lfloor\frac{k}{k_i}\right\rfloor}&=\sum_{I\subset\{i,\,k_i|k\}}\left(\beta\left(x\in(\mathbb R^*)^I,\,\sum_{i\in I}\varepsilon_ix_i^{k_i}\neq0\right)-(u-1)\beta\left(x\in(\mathbb R^*)^I,\,\sum_{i\in I}\varepsilon_ix_i^{k_i}=0\right)\right) \\
&=\beta\left(x\in\mathbb R^{\{i,\,k_i|k\}},\,\sum_{i,\,k_i|k}\varepsilon_ix_i^{k_i}\neq0\right)-(u-1)\beta\left(x\in\mathbb R^{\{i,\,k_i|k\}},\,\sum_{i,\,k_i|k}\varepsilon_ix_i^{k_i}=0\right)\\
&=u^{\#\{i,\,k_i|k\}}-u\beta\left(x\in\mathbb R^{\{i,\,k_i|k\}},\,\sum_{i,\,k_i|k}\varepsilon_ix_i^{k_i}=0\right)
\end{align*}
Thus $$\beta\left(x\in\mathbb R^{\{i,\,k_i|k\}},\,\sum_{i,\,k_i|k}\varepsilon_ix_i^{k_i}=0\right)=u^{\#\{i,\,k_i|k\}-1}-\beta\left(\overline{a_k}\right)u^{\sum_{i=1}^d\left\lfloor\frac{k}{k_i}\right\rfloor-1}$$
Similarly, we get
$$\beta\left(F^\varepsilon(a_k)\right)u^{\sum_{i=1}^d\left\lfloor\frac{k}{k_i}\right\rfloor}=\beta\left(x\in\mathbb R^{\{i,\,k_i|k\}},\,\sum_{i,\,k_i|k}\varepsilon_ix_i^{k_i}=\varepsilon1\right)-\beta\left(x\in\mathbb R^{\{i,\,k_i|k\}},\,\sum_{i,\,k_i|k}\varepsilon_ix_i^{k_i}=0\right)$$
So that,
$$\beta\left(x\in\mathbb R^{\{i,\,k_i|k\}},\,\sum_{i,\,k_i|k}\varepsilon_ix_i^{k_i}=\varepsilon1\right)=\left(u\beta\left(F^\varepsilon(a_k)\right)-\beta\left(\overline{a_k}\right)\right)u^{\sum_{i=1}^d\left\lfloor\frac{k}{k_i}\right\rfloor-1}+u^{\#\{i,\,k_i|k\}-1}$$
\end{proof}

\begin{proof}[Proof of \ref{item:fgZETAsame}$\Rightarrow$\ref{item:fgSAMEcoef}]
\ \\
We are going to compute inductively $\sigma_{k_{i_r}}^\pm$ for $r=1,\ldots,s$ in terms of the coefficients of $\tilde Z_f(T)$, which is enough to conclude.

Assume that $\sigma_{k_{i_1}}^\pm,\ldots,\sigma_{k_{i_{r-1}}}^\pm$ are already known. We are going to compute $\sigma_{k_{i_r}}^\pm$. Notice that the following argument allows one to compute directly $\sigma_{k_{i_1}}^\pm$.

By Lemma \ref{lem:recovBeta}, we may express $\displaystyle\pi=\beta\left(x\in\mathbb R^{\{i,\,k_i|k_{i_r}\}},\,\sum_{i,\,k_i|k_{i_r}}\varepsilon_ix_i^{k_i}=1\right)$ in terms of the coefficients of $Z_f(T)$.

For an exponent $k_i$ dividing $k_{i_r}$, we write $k_i=2^{N_i}l_i$ with $l_i$ odd. Notice that by definition of $K$, $k_i\in K$ and $N_i>0$. Then, using the $\AS$-change of variables $\tilde{x_i}=x_i^{l_i}$, we get
$$\pi=\beta\left(x\in\mathbb R^{\{i,\,k_i|k_{i_r}\}},\,\sum_{i,\,k_i|k_{i_r}}\varepsilon_ix_i^{k_i}=1\right)=\beta\left(x\in\mathbb R^{\{i,\,k_i|k_{i_r}\}},\,\sum_{i,\,k_i|k_{i_r}}\varepsilon_i\tilde{x_i}^{2^{N_i}}=1\right)$$

By Theorem \ref{thm:vppfibers}, if the leading coefficient of $\rho:=\pi-u^{\#\{i,\,k_i|k_{i_r}\}-1}$ is positive then
$$\deg(\rho)=\sum_{k\in K,\,k|k_{i_r}}\sigma_{k}^-\quad\text{and thus}\quad\sigma_{k_{i_r}}^-=\deg(\rho)-\sum_{\substack{k\in K,\,k|k_{i_r}\\k\neq k_{i_r}}}\sigma_{k}^-$$

Otherwise, still by Theorem \ref{thm:vppfibers}, if the leading coefficient of $\rho$ is negative, then $$\deg(\rho)=\sum_{k\in K,\,k|k_{i_r}}\sigma_{k}^--1\quad\text{and thus}\quad\sigma_{k_{i_r}}^-=\deg(\rho)+1-\sum_{\substack{k\in K,\,k|k_{i_r}\\k\neq k_{i_r}}}\sigma_{k}^-$$

Since we already know $\sigma_{k_{i_j}}^-$ for $j<r$, we are able to compute $\sigma^-_{k_{i_r}}$. \\

Finally, $\sigma^+_{k_{i_r}}=\#\left\{i\in\llbracket1,d\rrbracket,\,k_i=k_{i_r}\right\}-\sigma^-_{k_{i_r}}$.
\end{proof}

\subsection{Some virtual Poincaré polynomials}\label{sub:SomeLemmas}
\begin{lemma}\label{lem:BETAdiff}
Let $k\in2\mathbb N_{>0}$ and $g\in\mathbb R[y_1,\ldots,y_d]$ ($d$ possibly equals 0). Then $$\beta\left((x_1,x_2,y_1,\ldots,y_d)\in\mathbb R^{2+d},\,x_1^k-x_2^k+g(y)=0\right)=u\beta\left((y_1,\ldots,y_d)\in\mathbb R^{d},\,g(y)=0\right)+(u-1)u^d$$
\end{lemma}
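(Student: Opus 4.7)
The plan is to project the locus to $\mathbb R^d$ via $(x_1,x_2,y)\mapsto y$ and stratify according to the vanishing of $g$. Set $X=\{(x_1,x_2,y):x_1^k-x_2^k+g(y)=0\}$, $Z=\{y\in\mathbb R^d:g(y)=0\}$, and decompose $X$ into the preimages $X_0=X\cap(\mathbb R^2\times Z)$ and $X_1=X\cap(\mathbb R^2\times(\mathbb R^d\setminus Z))$; by additivity of $\beta$ it suffices to treat these two pieces separately.

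Over $Z$ the equation becomes $x_1^k=x_2^k$; since $k$ is even this cuts out the two lines $\{x_1=x_2\}$ and $\{x_1=-x_2\}$ meeting at the origin, which has class $2u-1$ in $K_0(\AS)$. Hence $X_0=\{x_1^k=x_2^k\}\times Z$ gives $\beta(X_0)=(2u-1)\beta(Z)$.

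For $X_1$, I would first compute the class of the fibre $C_c=\{(x_1,x_2)\in\mathbb R^2:x_1^k-x_2^k=c\}$ for $c\neq 0$: passing to the projective closure $\widetilde C_c=\{X^k-Y^k=cZ^k\}\subset\mathbb P^2_{\mathbb R}$ one checks it is smooth (since $c\neq 0$ forces the partial derivatives not to vanish simultaneously) and, parametrizing the two affine branches $x_1=\pm(c+x_2^k)^{1/k}$ by $x_2$, that its real locus is a single topological $S^1$ obtained by gluing the two branches through the only two real points at infinity $[1:\pm 1:0]$ (exactly two because $k$ is even). Hence $\beta(\widetilde C_c)=u+1$ and, removing the two points at infinity, $\beta(C_c)=u-1$. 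I would then trivialize the family $\{C_{-g(y)}\}_y$ over $\mathbb R^d\setminus Z$ piecewise: the map $(y,p_1,p_2)\mapsto(y,(-g(y))^{1/k}p_1,(-g(y))^{1/k}p_2)$ gives an $\AS$-bijection $\{g<0\}\times C_1\to X_1\cap(\mathbb R^2\times\{g<0\})$, and similarly over $\{g>0\}$ using $C_{-1}$. Both pieces therefore contribute $(u-1)\beta(\{g<0\})$ and $(u-1)\beta(\{g>0\})$ respectively, yielding $\beta(X_1)=(u-1)(u^d-\beta(Z))$.

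Summing, $\beta(X)=(2u-1)\beta(Z)+(u-1)(u^d-\beta(Z))=u\beta(Z)+(u-1)u^d$, as claimed. The main subtlety is the computation of $\beta(C_c)$: the tempting guess $\beta(C_c)=2u$ (one $u$ per connected component of the affine curve) is incorrect because the individual branches are \emph{not} $\AS$-subsets of $\mathbb P^2_{\mathbb R}$ — a real analytic arc through a point at infinity passes from one branch to the other. Only the full projective compactification is well-behaved, giving $u-1$, which is precisely what produces $(u-1)u^d$ (rather than $2u\cdot u^d$) in the statement.
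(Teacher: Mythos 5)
Your computation of $\beta(X_0)=(2u-1)\beta(Z)$ is correct ($\mathbb R^2\times Z$ is Zariski-closed in $\mathbb R^{2+d}$, so $X_0$ is a closed $\AS$-subset of $X$ and additivity applies), and the value $\beta(C_c)=u-1$ via the projective closure is also right. The problem is the treatment of $X_1$. You stratify $\mathbb R^d\setminus Z$ by the sign of $g$ and invoke $\beta$ on the two pieces, but $\{g<0\}$ and $\{g>0\}$ (and their preimages $X_1^\pm$ in $X$) are in general \emph{not} $\AS$-sets: a real analytic arc hitting $\{g=0\}$ transversally passes from one sign region into the other, so arc-symmetry fails (e.g.\ $g(y)=y_1$ and $\gamma(t)=(t,0,\dots,0)$). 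Hence $[\{g<0\}]$, $[\{g>0\}]$, $[X_1^\pm]$ are not elements of $K_0(\AS)$, $\beta$ of them is undefined, and neither the additivity relation $[A]=[A\setminus B]+[B]$ (which requires $B$ closed \emph{and} $\AS$) nor the relation for a bijection with $\AS$-graph can be invoked. This is precisely the same obstruction you correctly identify in your final remark about the two branches of $C_c$ not being $\AS$ in $\mathbb P^2_{\mathbb R}$ --- it resurfaces one level up and blocks your stratification.

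The paper's proof avoids any sign-of-$g$ split. It first reduces to $k=2^N$ via the $\AS$-bijection $(x_1,x_2,y)\mapsto(x_1^l,x_2^l,y)$ where $k=2^Nl$ with $l$ odd. Then it shows that
$$(z_1,z_2)\longmapsto\left(z_1-z_2,\ \prod_{j=0}^{N-1}\left(z_1^{2^j}+z_2^{2^j}\right)\right)$$
is a polynomial bijection of $\mathbb R^2$ (hence with Zariski-constructible, so $\AS$, graph), which transports the equation $z_1^{2^N}-z_2^{2^N}+g(y)=0$ to $u_1u_2+g(y)=0$. This hypersurface is then stratified by the Zariski-constructible conditions $u_1=0$ and $u_1\neq 0$: the first stratum is $\{0\}\times\mathbb R\times Z$, and the second is in $\AS$-bijection with $\mathbb R^*\times\mathbb R^d$ via $(u_1,y)\mapsto(u_1,-g(y)/u_1,y)$. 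Every set and graph involved is Zariski-constructible, so the $K_0(\AS)$ relations apply legitimately. To repair your argument you would need to replace the sign stratification of $X_1$ by such a Zariski-constructible one; the factorization is exactly the device that makes this possible.
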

\begin{proof}
Assume that $k=2^Nl$ where $l$ is odd, then the map $\mathbb R^{2+d}\rightarrow\mathbb R^{2+d}$ defined by $(x_1,x_2,y_1,\ldots,y_d)\mapsto(x_1^l,x_2^l,y_1,\ldots,y_d)$ is an $\AS$-bijection. Hence $$\beta\left((x_1,x_2,y_1,\ldots,y_d)\in\mathbb R^{2+d},\,x_1^k-x_2^k+g(y)=0\right)=\beta\left((z_1,z_2,y_1,\ldots,y_d)\in\mathbb R^{2+d},\,z_1^{2^N}-z_2^{2^N}+g(y)=0\right)$$
Now, notice that $$z_1^{2^N}-z_2^{2^N}=(z_1-z_2)\prod_{j=0}^{N-1}\left(z_1^{2^j}+z_2^{2^j}\right)$$
Since $N\ge1$, $(z_1,z_2)\mapsto\left(z_1-z_2,\prod_{j=0}^{N-1}\left(z_1^{2^j}+z_2^{2^j}\right)\right)$ is a bijection with $\AS$-graph as explained in what follows. First its graph is in $\AS$ since it is a polynomial mapping. For the bijectivity, fix $t=z_1-z_2$. If $t\neq0$, then $$\prod_{j=0}^{N-1}\left(z_1^{2^j}+z_2^{2^j}\right)=\frac{(z_2+t)^{2^N}-z_2^{2^N}}{t}$$ and the function $\mathbb R\rightarrow\mathbb R$ defined by $z_2\rightarrow\frac{(z_2+t)^{2^N}-z_2^{2^N}}{t}$ is a bijection. Indeed, it is surjective as a polynomial of odd degree and it is one-to-one since it is strictly increasing as one can be convinced by noticing that its derivative $\frac{2^N}{t}\left((z_2+t)^{2^N-1}-z_2^{2^N-1}\right)$ is positive (as a polynomial of even degree with positive leading coefficient and no real root).  When $t=0$, we get that $$\prod_{j=0}^{N-1}\left(z_1^{2^j}+z_2^{2^j}\right)=2^Nz_2^{2^N-1}$$ which also defines a bijection with $\AS$-graph.

Hence
$$\beta\left((z_1,z_2,y_1,\ldots,y_d)\in\mathbb R^{2+d},\,z_1^{2^N}-z_2^{2^N}+g(y)=0\right)=\beta\left((u_1,u_2,y_1,\ldots,y_d)\in\mathbb R^{2+d},\,u_1u_2+g(y)=0\right)$$
Since $(u_1,y_1,\ldots,y_d)\mapsto\left(u_1,u_2=-\frac{g(y)}{u_1},y\right)$ is a bijection with $\AS$-graph from $\mathbb R^*\times\mathbb R^d$ to $\left\{(u_1,u_2,y_1,\ldots,y_d)\in\mathbb R^*\times\mathbb R^{1+d},\,u_1u_2+g(y)=0\right\}$, we have
\begin{align*}
\hspace{-.7cm}\beta\left((u_1,u_2,y_1,\ldots,y_d)\in\mathbb R^{2+d},\,u_1u_2+g(y)=0\right)&=\beta\left((0,u_2,y_1,\ldots,y_d)\in\mathbb R^{2+d},\,g(y)=0\right)+\beta(\mathbb R^*\times\mathbb R^d) \\
&=u\beta\left((y_1,\ldots,y_d)\in\mathbb R^{d},\,g(y)=0\right)+(u-1)u^d
\end{align*}
\end{proof}

\begin{lemma}\label{lem:BETAsame}
Let $$f=\varepsilon\sum_{i=1}^{r}x_{i}^{2^N}+\sum_{j=1}^s\varepsilon_jy_j^{2^{k_j}}$$
with $r,s\ge0$, $\varepsilon,\varepsilon_j\in\{\pm1\}$ and $k_j<N$. Then
$$\beta\left((x,y)\in\mathbb R^r\times\mathbb R^s,\,f(x,y)=0\right)=(u^r-1)\beta\left(y\in\mathbb R^s,\,\sum_{j=1}^s\varepsilon_jy_j^{2^{k_j}}=-\varepsilon\right)+\beta\left(y\in\mathbb R^s,\,\sum_{j=1}^s\varepsilon_jy_j^{2^{k_j}}=0\right)
$$
\end{lemma}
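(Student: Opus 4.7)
The plan is to split $\{f = 0\}$ into the two Zariski-constructible (hence $\AS$) pieces $\{f = 0, x = 0\}$ and $\{f = 0, x \neq 0\}$. The first equals $\{0\} \times \{g = 0\}$ and contributes $\beta(\{g = 0\})$. For the second piece, I will construct an explicit $\AS$-bijection onto $(\mathbb R^r \setminus \{0\}) \times \{g = -\varepsilon\}$, whose virtual Poincar\'e polynomial is $(u^r - 1)\beta(\{g = -\varepsilon\})$. Adding the two pieces then yields the desired formula $\beta(\{f = 0\}) = \beta(\{g = 0\}) + (u^r - 1)\beta(\{g = -\varepsilon\})$.

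The construction exploits the fact that $2^{k_j}$ divides $2^N$ for every $j$ (since $k_j < N$). For $x \in \mathbb R^r \setminus \{0\}$, set $h_j(x) := \bigl(\sum_{i=1}^r x_i^{2^N}\bigr)^{1/2^{k_j}}$ (the unique positive root), and define $\Phi(x, y') := (x, y)$ with $y_j := h_j(x)\, y'_j$. Since $h_j(x)^{2^{k_j}} = \sum_i x_i^{2^N}$, one computes $g(y) = \bigl(\sum_i x_i^{2^N}\bigr) g(y') = -\varepsilon \sum_i x_i^{2^N}$, so $f(x, y) = 0$; the inverse $(x, y) \mapsto (x, (y_j / h_j(x))_j)$ is well-defined on $\{f = 0, x \neq 0\}$ because $h_j > 0$ on $\mathbb R^r \setminus \{0\}$.

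The main obstacle is checking that the graph of $\Phi$ is $\AS$. This will follow from the arc-analyticity of each $h_j$: if $\gamma(s)$ is an analytic arc in $\mathbb R^r$, then $P(s) := \sum_i \gamma_i(s)^{2^N}$ has order at every $s_0$ divisible by $2^N$, because each summand is non-negative with positive leading coefficient so no cancellation occurs in the minimal-order term; as $2^{k_j} \mid 2^N$, the function $P(s)^{1/2^{k_j}}$ is again real analytic in $s$. Granting this, if an analytic arc lies on the graph of $\Phi$ over some interval $(-1, 0)$, the polynomial equation $g(y') + \varepsilon = 0$ and the analytic equation $y_j - h_j(x)\, y'_j = 0$ both extend to $(-1, 1)$ by the identity theorem, while the Zariski-open condition $x \neq 0$ fails at most at the isolated zeros of the analytic function $s \mapsto x(s)$. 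This verifies the $\AS$-condition from the excerpt's definition for the graph, so $\Phi$ is an $\AS$-isomorphism and the lemma follows.
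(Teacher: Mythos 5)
Your proof is correct and takes essentially the same approach as the paper: split off the $x=0$ slice contributing $\beta(\{g=0\})$, and on $x\neq 0$ rescale $y_j$ by the $2^{k_j}$-th root of $\sum_i x_i^{2^N}$ to obtain an $\AS$-bijection onto $(\mathbb R^r\setminus\{0\})\times\{g=-\varepsilon\}$. Your order-of-vanishing argument fills in the arc-analyticity at the origin that the paper only asserts; note, though, that the paper also invokes arc-analyticity ``at infinity'' (needed because $\AS$-sets live in $\mathbb P^n_{\mathbb R}$, so arcs through the graph may pass through the hyperplane at infinity), a case your affine argument does not address.
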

\begin{proof}
The map $(\mathbb R^r\setminus\{0\})\times\mathbb R^s\mapsto(\mathbb R^r\setminus\{0\})\times\mathbb R^s$ defined by $(x,y)\mapsto(x,z)=\left(x,\left(\sum_{i=1}^{r}x_{i}^{2^N}\right)^{-2^{-k_j}}y_i\right)$ is a bijection with $\AS$-graph. Indeed, since $k_j<N$, the function $x\mapsto\left(\sum_{i=1}^{r}x_{i}^{2^N}\right)^{2^{-k_j}}$ is arc-analytic at the origin and at the infinity.

Hence
\begin{align*}
&\beta\left((x,y)\in\mathbb R^r\times\mathbb R^s,\,f(x,y)=0\right)\\
&\quad\quad=\beta\left((x,y)\in(\mathbb R^r\setminus\{0\})\times\mathbb R^s,\,f(x,y)=0\right)+\beta\left(y\in\mathbb R^s,\,\sum_{j=1}^s\varepsilon_jy_j^{2^{k_j}}=0\right) \\
&\quad\quad=\beta\left((x,z)\in(\mathbb R^r\setminus\{0\})\times\mathbb R^s,\,\varepsilon+\sum_{j=1}^s\varepsilon_jz_j^{2^{k_j}}=0\right)+\beta\left(y\in\mathbb R^s,\,\sum_{j=1}^s\varepsilon_jy_j^{2^{k_j}}=0\right) \\
&\quad\quad=(u^r-1)\beta\left(z\in\mathbb R^s,\,\varepsilon+\sum_{j=1}^s\varepsilon_jz_j^{2^{k_j}}=0\right)+\beta\left(y\in\mathbb R^s,\,\sum_{j=1}^s\varepsilon_jy_j^{2^{k_j}}=0\right)
\end{align*}
\end{proof}

\noindent Until Lemma \ref{lem:BETAfib-1}, we are going to use the following notations: $$P_{A,B}(x,y)=\sum_{i=1}^Ax_i^{2^N}-\sum_{j=1}^By_j^{2^N}$$ and $$R(z)=\sum_{i=1}^s\varepsilon_iz_j^{2^{k_i}}$$ with
\begin{itemize}[nosep]
\item $(A,B)\in\mathbb N_{\ge0}^2\setminus\{(0,0)\}$
\item $\varepsilon_i\in\{\pm1\}$
\item $s\ge0$
\item $1\le k_i<N$
\end{itemize}
\ \\
In order to lighten the notations, if $f\in\mathbb R[x_1,\ldots,x_d]$, we will simply write $\beta(f=\varepsilon)$ for $\beta\left(x\in\mathbb R^d,\,f(x)=\varepsilon\right)$.

\begin{lemma}\label{lem:BETAone}
\begin{itemize}[nosep]
\item $\displaystyle\beta(P_{A+1,B}+R=0)=(u-1)\beta(P_{A,B}+R=-1)+\beta(P_{A,B}+R=0)$
\item $\displaystyle\beta(P_{A,B+1}+R=0)=(u-1)\beta(P_{A,B}+R=1)+\beta(P_{A,B}+R=0)$
\end{itemize}
\end{lemma}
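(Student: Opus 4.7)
The two identities are symmetric, so I would focus on the first one and simply note that the second follows by the identical argument applied to a new $y$-variable.

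The plan is to split the $\mathcal{AS}$-set $\{P_{A+1,B}+R=0\}\subset\mathbb R^{A+1+B+s}$ according to whether the newly added variable $t:=x_{A+1}$ vanishes, and apply additivity (axiom (ii)). The closed subset $\{t=0\}$ is clearly $\mathcal{AS}$-bijective to $\{P_{A,B}+R=0\}\subset\mathbb R^{A+B+s}$, contributing $\beta(P_{A,B}+R=0)$. It remains to identify the open piece $\{P_{A+1,B}+R=0,\,t\neq 0\}$ with $\mathbb R^*\times\{P_{A,B}+R=-1\}$ up to an $\mathcal{AS}$-bijection; once this is done, multiplicativity of $\beta$ together with $\beta(\mathbb R^*)=u-1$ yields the desired formula.

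To construct the bijection, I would use the rescaling
\[
\Phi:\mathbb R^*\times\{P_{A,B}(x',y')+R(z')=-1\}\longrightarrow\{P_{A+1,B}+R=0,\,t\neq 0\}
\]
defined by
\[
(t,x',y',z')\mapsto\bigl(t x'_1,\ldots,t x'_A,\,t y'_1,\ldots,t y'_B,\,t^{2^{N-k_1}}z'_1,\ldots,t^{2^{N-k_s}}z'_s,\,t\bigr).
\]
Since $2^N$ is even and each $2^{N-k_i}\ge 2$, one checks that $P_{A,B}(tx',ty')=t^{2^N}P_{A,B}(x',y')$ and $R(t^{2^{N-k_i}}z'_i)=t^{2^N}R(z')$, so the image equation becomes $t^{2^N}(P_{A,B}(x',y')+R(z')+1)=0$, which is satisfied because the parenthesis is zero. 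Inverse is given by $x'_i=x_i/t$, $y'_j=y_j/t$, $z'_i=z_i/t^{2^{N-k_i}}$, well defined on $t\ne 0$.

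Both $\Phi$ and its inverse are rational maps on the locus $t\neq 0$, hence have Zariski-constructible, therefore $\mathcal{AS}$, graphs, so axiom (i) applies. The only point requiring attention is that the scalings $t$ and $t^{2^{N-k_i}}$ must be nonvanishing automorphisms of $\mathbb R$ for each fixed $t\neq 0$; this is automatic since $t\in\mathbb R^*$ and $t^{2^{N-k_i}}>0$ because $2^{N-k_i}$ is even (using $k_i<N$). I do not anticipate any serious obstacle in this proof, since the assumptions on $k_i$ and $N$ are exactly what is needed to make the scaling well behaved in the $\mathcal{AS}$ category.
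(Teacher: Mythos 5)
Your argument is correct and coincides with the paper's proof: both decompose $\{P_{A+1,B}+R=0\}$ along $\{x_{A+1}=0\}$ and, on the open complement, use the weighted rescaling $x_i=tx_i'$, $y_j=ty_j'$, $z_i=t^{2^{N-k_i}}z_i'$ (exploiting that $k_i<N$) to obtain an $\mathcal{AS}$-bijection with $\mathbb R^*\times\{P_{A,B}+R=-1\}$. The only cosmetic difference is that the paper deduces the second identity from the symmetry $\beta(P_{A,B+1}+R=0)=\beta(P_{B+1,A}+(-R)=0)$ rather than redoing the argument.
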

\begin{proof}
We first rewrite:
\begin{align*}
\beta(P_{A+1,B}+R=0)&=\beta\left((w,x,y,z)\in\mathbb R\times\mathbb R^A\times\mathbb R^B\times\mathbb R^s,\,w^{2^N}+P_{A,B}(x,y)+R(z)=0\right) \\
&=\beta\left((w,x,y,z)\in\mathbb R^*\times\mathbb R^A\times\mathbb R^B\times\mathbb R^s,\,w^{2^N}+P_{A,B}(x,y)+R(z)=0\right)\\
&\quad\quad+\beta\left(P_{A,B}+R=0\right)
\end{align*}
To compute the first term of the RHS, we use the following change of variables: $$w=\tilde w,x_i=w\tilde x_i,y_j=w\tilde y_j,z_i=w^{2^{N-k_i}}\tilde{z_i}$$
Hence
\begin{align*}
\beta(P_{A+1,B}+R=0)&=\beta\left((\tilde w,\tilde x,\tilde y,\tilde z)\in\mathbb R^*\times\mathbb R^A\times\mathbb R^B\times\mathbb R^s,\,1+P_{A,B}(\tilde x,\tilde y)+R(\tilde z)=0\right)\\
&\quad\quad+\beta\left(P_{A,B}+R=0\right) \\
&=(u-1)\beta(P_{A,B}+R=-1)+\beta(P_{A,B}+R=0)
\end{align*}
We obtain the second equality, noticing that $$\beta(P_{A,B+1}+R=0)=\beta(P_{B+1,A}+(-R)=0)$$
\end{proof}

\begin{lemma}\label{lem:BETAfib0}
$$\beta(P_{A,B}+R=0)-u^{A+B+s-1}=\left\{\begin{array}{ll}(u^B-u^A)\beta(R=1)+u^A\beta(R=0)-u^{B+s-1}&\text{ if $B\ge A$}\\(u^A-u^B)\beta(R=-1)+u^B\beta(R=0)-u^{A+s-1}&\text{ if $A\ge B$}\end{array}\right.$$
\end{lemma}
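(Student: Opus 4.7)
The plan is to exploit the symmetry $\beta(P_{A,B}+R=0)=\beta(P_{B,A}+(-R)=0)$, together with $\beta(-R=\pm 1)=\beta(R=\mp 1)$ and $\beta(-R=0)=\beta(R=0)$, in order to reduce to the single case $B\geq A$; the other case then follows by interchanging the roles of $A$ and $B$ and replacing $R$ with $-R$ in the resulting formula. So I will work under the assumption $B\geq A$ throughout.

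The key step is to pair up the variables $x_i$ with $y_i$ for $i=1,\ldots,A$ using Lemma \ref{lem:BETAdiff}. Writing $P_{A,B}(x,y)+R(z)=x_1^{2^N}-y_1^{2^N}+\bigl[P_{A-1,B-1}(x',y')+R(z)\bigr]$, where the bracket is a polynomial in $A+B+s-2$ variables, Lemma \ref{lem:BETAdiff} (applied with $k=2^N$, which is even) yields
$$\beta(P_{A,B}+R=0)=u\,\beta(P_{A-1,B-1}+R=0)+(u-1)u^{A+B+s-2}.$$
Iterating this identity $A$ times peels off the variables $x_1,\ldots,x_A$ and $y_1,\ldots,y_A$ and leads to
$$\beta(P_{A,B}+R=0)=u^{A}\,\beta(P_{0,B-A}+R=0)+(u-1)\sum_{j=1}^{A}u^{A+B+s-1-j}.$$

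The remaining ingredient is an evaluation of $\beta(P_{0,B-A}+R=0)$. When $B>A$ this equals $\beta\!\left(-\sum_{j=1}^{B-A}y_j^{2^N}+R(z)=0\right)$, so Lemma \ref{lem:BETAsame} applied with the parameter $\varepsilon=-1$ and $r=B-A$ returns $(u^{B-A}-1)\beta(R=1)+\beta(R=0)$; when $B=A$ the expression is simply $\beta(R=0)$, consistent with the previous formula under the convention $u^{0}-1=0$. Substituting this back, collapsing the geometric sum as $(u-1)\sum_{j=1}^{A}u^{A+B+s-1-j}=u^{A+B+s-1}-u^{B+s-1}$, and transposing the term $u^{A+B+s-1}$ to the left gives exactly the identity stated for the case $B\geq A$.

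The only genuine difficulty is bookkeeping: correctly tracking the running exponent of $u$ in the inhomogeneous term produced at each iteration of Lemma \ref{lem:BETAdiff}, and checking that the resulting geometric sum telescopes to the compact form $u^{A+B+s-1}-u^{B+s-1}$. The hypothesis $1\leq k_i<N$ is used only through Lemma \ref{lem:BETAsame}, while the evenness of $2^N$ is what legitimises every application of Lemma \ref{lem:BETAdiff}.
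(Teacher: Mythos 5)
Your proof is correct and follows essentially the same route as the paper's: reduce to $B\geq A$ by the symmetry $\beta(P_{A,B}+R=0)=\beta(P_{B,A}+(-R)=0)$, iterate Lemma \ref{lem:BETAdiff} to pair off $A$ variables on each side, and apply Lemma \ref{lem:BETAsame} with $\varepsilon=-1$ to the residual $P_{0,B-A}+R$. The only cosmetic difference is that the paper works directly with $\alpha_{A,B}=\beta(P_{A,B}+R=0)-u^{A+B+s-1}$, for which the recursion becomes the homogeneous $\alpha_{A,B}=u\,\alpha_{A-1,B-1}$, so the telescoping geometric sum you track explicitly is absorbed into the normalization.
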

\begin{proof}
Assume that $B\ge A$. \\
Set $\alpha_{A,B}=\beta(P_{A,B}+R=0)-u^{A+B+s-1}$. \\
By Lemma \ref{lem:BETAdiff}, $\alpha_{A,B}=u\alpha_{A-1,B-1}$. Hence,
\begin{align*}
\alpha_{A,B}&=u^A\alpha_{0,B-A} \\
&=u^A\left(\beta(P_{0,B-A}+R=0)-u^{B-A+s-1})\right)
\end{align*}
By Lemma \ref{lem:BETAsame},
\begin{align*}
\alpha_{A,B}&=u^A\left((u^{B-A}-1)\beta(R=1)+\beta(R=0)-u^{B-A+s-1}\right) \\
&=(u^B-u^A)\beta(R=1)+u^A\beta(R=0)-u^{B+s-1}
\end{align*}
If $A\ge B$, we reduce to the previous case noticing that $$\beta(P_{A,B}+R=0)=\beta(P_{B,A}+(-R)=0)$$
\end{proof}

\begin{lemma}\label{lem:BETAfib1}
$$\beta(P_{A,B}+R=1)-u^{A+B+s-1}=\left\{\begin{array}{ll}u^{B-1}\left(u\beta(R=1)-u^s\right)&\text{ if $B\ge A$}\\u^B\left(\beta(R=0)-\beta(R=-1)\right)&\text{ if $A>B$}\end{array}\right.$$
\end{lemma}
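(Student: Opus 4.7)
The plan is to reduce Lemma~\ref{lem:BETAfib1} to the already established Lemmas~\ref{lem:BETAone} and~\ref{lem:BETAfib0}. Indeed, the second identity of Lemma~\ref{lem:BETAone} yields
\[
(u-1)\beta(P_{A,B}+R=1) = \beta(P_{A,B+1}+R=0) - \beta(P_{A,B}+R=0),
\]
so it suffices to compute the right-hand side via Lemma~\ref{lem:BETAfib0}, divide by $u-1$, and subtract $u^{A+B+s-1}$.

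The computation splits according to the two branches of Lemma~\ref{lem:BETAfib0}. In the case $B\ge A$, we have $B+1\ge A$ as well, and both quantities on the right are expressed via the $B\ge A$ branch as polynomials in $\beta(R=0)$ and $\beta(R=1)$. After subtraction the $\beta(R=0)$ contributions cancel and the $\beta(R=1)$ coefficients telescope as $u^{B+1}-u^B = u^B(u-1)$, so the difference factors as $(u-1)\bigl(u^{A+B+s-1} + u^B\beta(R=1) - u^{B+s-1}\bigr)$; one then reads off $u^{B-1}\bigl(u\beta(R=1) - u^s\bigr)$. In the case $A>B$ one has $A\ge B+1$, so both right-hand terms fall under the $A\ge B$ branch, now in terms of $\beta(R=0)$ and $\beta(R=-1)$; the analogous telescoping $(u^A-u^{B+1})-(u^A-u^B) = -u^B(u-1)$ gives the difference $(u-1)\bigl(u^{A+B+s-1} + u^B(\beta(R=0)-\beta(R=-1))\bigr)$, producing the second half of the formula.

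The only mild subtlety is the boundary case $A=B+1$, where $\beta(P_{A,B+1}+R=0) = \beta(P_{B+1,B+1}+R=0)$ sits on the diagonal of Lemma~\ref{lem:BETAfib0} and could a priori be computed via either branch; a direct check confirms that both branches agree when $A=B$, so either choice is consistent with treating $\beta(P_{A,B}+R=0)$ via the $A\ge B$ branch. Beyond this bookkeeping the argument is a routine algebraic simplification, so I do not anticipate any genuine obstacle.
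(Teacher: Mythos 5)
Your proof is correct and follows essentially the same route as the paper: applying the second identity of Lemma~\ref{lem:BETAone} to express $(u-1)\beta(P_{A,B}+R=1)$ as the difference $\beta(P_{A,B+1}+R=0)-\beta(P_{A,B}+R=0)$, then substituting Lemma~\ref{lem:BETAfib0} into each term and simplifying. The telescoping computations in both branches and the remark about the diagonal case $A=B+1$ are accurate and match the paper's argument.
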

\begin{proof}
First, notice that by Lemma \ref{lem:BETAone}, $$(u-1)\beta(P_{A,B}+R=1)=\beta(P_{A,B+1}+R=0)-\beta(P_{A,B}+R=0)$$

Next, by Lemma \ref{lem:BETAfib0},
$$(u-1)\beta(P_{A,B}+R=1)=
\left\{\begin{array}{ll}
u^B(u-1)\beta(R=1)-u^{B+s-1}(u-1)+u^{A+B+s-1}(u-1)          &\text{ if $B\ge A$} \\
-u^B(u-1)\beta(R=-1)+u^B(u-1)\beta(R=0)+u^{A+B+s-1}(u-1) &\text{ if $A>B$}
\end{array}\right.$$
\end{proof}

\begin{lemma}\label{lem:BETAfib-1}
$$\beta(P_{A,B}+R=-1)-u^{A+B+s-1}=\left\{\begin{array}{ll}u^{A-1}\left(u\beta(R=-1)-u^s\right)&\text{ if $A\ge B$}\\u^A\left(\beta(R=0)-\beta(R=1)\right)&\text{ if $B>A$}\end{array}\right.$$
\end{lemma}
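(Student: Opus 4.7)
The plan is to deduce this lemma directly from Lemma \ref{lem:BETAfib1} by exploiting the symmetry between the $+1$ and $-1$ fibers. The key observation is that the polynomial $P_{A,B}$ satisfies $-P_{A,B}(x,y) = P_{B,A}(y,x)$, since it is just a signed sum of $2^N$-th powers. Moreover, $\tilde R(z) := -R(z) = \sum_{i=1}^s (-\varepsilon_i) z_i^{2^{k_i}}$ is again of the form required in the setup, with coefficients $-\varepsilon_i \in \{\pm 1\}$.

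The first step is to observe the identity
\[
\beta(P_{A,B}+R=-1)=\beta(-P_{A,B}-R=1)=\beta(P_{B,A}+\tilde R=1),
\]
which follows because the map $(x,y,z)\mapsto(y,x,z)$ is an $\AS$-bijection sending the fiber $\{P_{A,B}+R=-1\}$ onto the fiber $\{P_{B,A}+\tilde R=1\}$.

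Next I apply Lemma \ref{lem:BETAfib1} to $P_{B,A}+\tilde R=1$, swapping the roles of $A$ and $B$. This yields
\[
\beta(P_{B,A}+\tilde R=1)-u^{A+B+s-1}=\begin{cases}u^{A-1}\left(u\beta(\tilde R=1)-u^s\right)&\text{if }A\ge B\\ u^{A}\left(\beta(\tilde R=0)-\beta(\tilde R=-1)\right)&\text{if }B>A\end{cases}
\]
Finally I translate back: since $\tilde R=-R$, the $\AS$-bijection $z\mapsto z$ combined with the sign flip gives $\beta(\tilde R=\eta)=\beta(R=-\eta)$ for $\eta\in\{-1,0,1\}$. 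Substituting $\beta(\tilde R=1)=\beta(R=-1)$, $\beta(\tilde R=0)=\beta(R=0)$, and $\beta(\tilde R=-1)=\beta(R=1)$ produces exactly the stated formula. There is no real obstacle here; the content of the argument has already been absorbed into Lemma \ref{lem:BETAfib1}, and this final lemma is essentially a formal rewriting via the involution that swaps the two blocks of variables in $P_{A,B}$ and negates $R$.
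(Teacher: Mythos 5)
Your proof is correct and takes essentially the same approach as the paper, whose entire proof is the one-line observation $\beta(P_{A,B}+R=-1)=\beta(P_{B,A}+(-R)=1)$ followed by an implicit application of Lemma \ref{lem:BETAfib1}; you have simply spelled out the same symmetry argument in full detail.
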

\begin{proof}
Notice that $\beta(P_{A,B}+R=-1)=\beta(P_{B,A}+(-R)=1)$.
\end{proof}

The following formulae generalize the ones obtained by G. Fichou for the homogeneous case of degree $2$ \cite[Proposition 2.1, Corollaries 2.5\&2.6]{Fic06} \cite[Proposition 2.1]{Fic06-rims}.
\begin{thm}\label{thm:vppfibers}
Fix $1\le k_1<\cdots<k_d$ with $d\in\mathbb N_{\ge1}$.

\noindent For $r\in\{1,\ldots,d\}$, fix $(A_r,B_r)\in\mathbb N_{\ge0}^2\setminus\{(0,0)\}$ and define $P_r\in\mathbb R[x_{r,1},\ldots,x_{r,A_r},y_{r,1},\ldots,y_{r,B_r}]$ by $$P_r(x_r,y_r)=\sum_{i=1}^{A_r}x_{r,i}^{2^{k_r}}-\sum_{j=1}^{B_r}y_{r,j}^{2^{k_r}}$$

\noindent Define $f\in\mathbb R[x_{r,i},y_{r,j}]_{1\le r\le d}$ by $f=\sum_{r=1}^dP_r$.

\noindent Set $\sigma^+=\sum_{r=1}^dA_r$, $\sigma^-=\sum_{r=1}^dB_r$ and $s=\sigma^++\sigma^-$ the number of variables.

\noindent Set $m=\min\left(\{r,\,A_r\neq B_r\}\right)$ with the convention $\min(\emptyset)=\infty$.

\noindent Then
$$\beta\left(f=0\right)-u^{s-1}=\left\{\begin{array}{ll}-u^{\sigma^+-1}+u^{\sigma^-}&\text{ if $m\neq\infty$ and $A_m>B_m$}\\u^{\sigma^+}-u^{\sigma^--1}&\text{ otherwise}\end{array}\right.$$
$$\beta\left(f=1\right)-u^{s-1}=\left\{\begin{array}{ll}u^{\sigma^-}&\text{ if $m\neq\infty$ and $A_m>B_m$}\\-u^{\sigma^--1}&\text{ otherwise}\end{array}\right.$$
$$\beta\left(f=-1\right)-u^{s-1}=\left\{\begin{array}{ll}u^{\sigma^+}&\text{ if $m\neq\infty$ and $B_m>A_m$}\\-u^{\sigma^+-1}&\text{ otherwise}\end{array}\right.$$
\end{thm}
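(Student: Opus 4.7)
\emph{Plan.} I would prove the theorem by induction on $d$, the number of distinct exponent layers. Lemmas \ref{lem:BETAfib0}, \ref{lem:BETAfib1}, \ref{lem:BETAfib-1} are designed precisely for this reduction: each computes $\beta(P_{A,B}+R=\star)$ where $P_{A,B}$ carries the top exponent $2^N$ and $R$ is a polynomial whose exponents $2^{k_i}$ are strictly smaller. Setting $N=k_d$, $(A,B)=(A_d,B_d)$ and $R=\sum_{r=1}^{d-1}P_r$ (which has $s'=s-A_d-B_d$ variables and satisfies $k_i<N$), one reduces each of $\beta(f=0)$, $\beta(f=1)$, $\beta(f=-1)$ to the three quantities $\beta(R=0)$, $\beta(R=1)$, $\beta(R=-1)$, known inductively.

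\emph{Base case $d=1$.} The lemmas specialize to empty $R$, so $\beta(R=0)=1$ and $\beta(R=\pm1)=0$. A case split on the sign of $A_1-B_1$ recovers the three formulas; for instance, when $B_1>A_1$, Lemma \ref{lem:BETAfib0} yields $\beta(f=0)-u^{s-1}=u^{A_1}-u^{B_1-1}=u^{\sigma^+}-u^{\sigma^--1}$, which is the ``otherwise'' branch (since $A_m<B_m$ here).

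\emph{Inductive step.} Set $\sigma_R^+=\sigma^+-A_d$, $\sigma_R^-=\sigma^--B_d$ and $m^{(d-1)}=\min(\{r<d\,:\,A_r\neq B_r\})$ with the convention $\min(\emptyset)=\infty$. The inductive hypothesis gives explicit expressions for $\beta(R=0)$ and $\beta(R=\pm1)$ depending on whether $m^{(d-1)}=\infty$, or $A_{m^{(d-1)}}>B_{m^{(d-1)}}$, or $A_{m^{(d-1)}}<B_{m^{(d-1)}}$. Substituting into the RHS of Lemmas \ref{lem:BETAfib0}--\ref{lem:BETAfib-1} and expanding, the various $u^{\bullet+s'-1}$ contributions collapse to the leading term $u^{s-1}$, cross terms cancel pairwise, and the surviving monomials reassemble into the claimed formulas via $A_d+\sigma_R^+=\sigma^+$ and $B_d+\sigma_R^-=\sigma^-$.

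\emph{Expected main obstacle.} The arithmetic is elementary; the real work is the bookkeeping. For each of the three fibers $\{f=\varepsilon\}$, $\varepsilon\in\{0,\pm 1\}$, one must simultaneously track (i) which branch of the relevant lemma applies, dictated by the sign of $B_d-A_d$; (ii) which branch of the inductive hypothesis one is substituting, dictated by $m^{(d-1)}$; and (iii) which branch of the theorem's conclusion the answer should land in, dictated by $m^{(d)}$. The crucial compatibility, easily verified, is that $m^{(d)}=m^{(d-1)}$ whenever $m^{(d-1)}<\infty$, while $m^{(d)}=d$ if $m^{(d-1)}=\infty$ and $A_d\neq B_d$; this ensures every cell of the resulting case-grid lands in the correct target branch.
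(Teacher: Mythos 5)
Your plan is exactly the paper's proof: induction on $d$ using Lemmas \ref{lem:BETAfib0}, \ref{lem:BETAfib1}, \ref{lem:BETAfib-1} with $N=k_d$ and $R=\sum_{r<d}P_r$ (the paper phrases it as adding a new top layer $P_{d+1}$ and letting the old $f$ play the role of $R$, but this is the same reduction), with the same base case and the same case split governed by the sign of $A_d-B_d$ and by whether $m^{(d-1)}$ is $\infty$. Your compatibility observation about how $m$ propagates is the right organizing principle, and your base-case computation is correct; the remaining work is just the bookkeeping you describe, which the paper carries out explicitly.
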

\begin{proof}
We are going to prove the theorem by induction on $d$ (i.e. on the number of different exponents appearing in the expansion of $f$). \\

When $d=1$, it is a direct consequence of Lemmas \ref{lem:BETAfib0}, \ref{lem:BETAfib1} and \ref{lem:BETAfib-1} (case $s=0$). \\

Now, assume that the theorem is proved for some $d\ge1$ and define $f$ as in the statement. Let $k_{d+1}>k_d$ and $(A_{d+1},B_{d+1})\in\mathbb N_{\ge0}^2\setminus\{(0,0)\}$. To lighten the notations, we set $(A,B):=(A_{d+1},B_{d+1})$. We are going to compute $\beta\left(P_{d+1}+f=\varepsilon\right)-u^{A+B+s-1},\,\varepsilon\in\{-1,0,1\}$. \\

\begin{enumerate}[nosep,label=\arabic*.]
\item \underline{First case:} $m=\infty$. Notice that in this case, we have $\sigma^+=\sigma^-$.
\begin{enumerate}[nosep,label=(\alph*)]
\item Assume that $A=B$. By Lemma \ref{lem:BETAfib0}, we have
\begin{align*}
\beta\left(P_{d+1}+f=0\right)-u^{A+B+s-1}&= u^A\left(\beta(f=0)-u^{s-1}\right)\\
&=u^{A+\sigma^+}-u^{B+\sigma^--1}
\end{align*}
By Lemma \ref{lem:BETAfib1}, we have
\begin{align*}
\beta\left(P_{d+1}+f=1\right)-u^{A+B+s-1}&= u^B\left(\beta(f=1)-u^{s-1}\right) \\ &=-u^{B+\sigma^--1}
\end{align*}By Lemma \ref{lem:BETAfib-1}, we have
\begin{align*}
\beta\left(P_{d+1}+f=-1\right)-u^{A+B+s-1} &= u^A\left(\beta(f=-1)-u^{s-1}\right)\\ &=-u^{A+\sigma^+-1}
\end{align*}
\item  Assume that $A>B$. By Lemma \ref{lem:BETAfib0}, we have
\begin{align*}
\beta\left(P_{d+1}+f=0\right)-u^{A+B+s-1}&=u^A\left(\beta(f=-1)-u^{s-1}\right)+u^B\left(\beta(f=0)-\beta(f=-1)\right) \\
&=-u^{A+\sigma^+-1}+u^{B+\sigma^-}
\end{align*}
By Lemma \ref{lem:BETAfib1}, we have
\begin{align*}
\beta\left(P_{d+1}+f=1\right)-u^{A+B+s-1}&=u^B\left(\beta(f=0)-\beta(f=-1)\right) \\ &= u^{B+\sigma^-}
\end{align*}
By Lemma \ref{lem:BETAfib-1}, we have
\begin{align*}
\beta\left(P_{d+1}+f=-1\right)-u^{A+B+s-1}&=u^A\left(\beta(f=-1)-u^{s-1}\right) \\ &= -u^{A+\sigma^+-1}
\end{align*}
\item  Assume that $A<B$. By Lemma \ref{lem:BETAfib0}, we have
\begin{align*}
\beta\left(P_{d+1}+f=0\right)-u^{A+B+s-1}&=u^A\left(\beta(f=0)-\beta(f=1)\right)+u^B\left(\beta(f=1)-u^{s-1}\right) \\
&=u^{A+\sigma^+}-u^{B+\sigma^--1}
\end{align*}
By Lemma \ref{lem:BETAfib1}, we have
\begin{align*}
\beta\left(P_{d+1}+f=1\right)-u^{A+B+s-1}&= u^B\left(\beta(f=1)-u^{s-1}\right)\\ &= -u^{B+\sigma^--1}
\end{align*}
By Lemma \ref{lem:BETAfib-1}, we have
\begin{align*}
\beta\left(P_{d+1}+f=-1\right)-u^{A+B+s-1}&=u^A\left(\beta(f=0)-\beta(f=1)\right) \\ &= u^{A+\sigma^+}
\end{align*}
\end{enumerate}

\item \underline{Second case:} $1\le m\le d$. Assume that $A_m>B_m$ (the other case is similar).
\begin{enumerate}[nosep,label=(\alph*)]
\item Assume that $A=B$: \\
By Lemma \ref{lem:BETAfib0}, we have
\begin{align*}
\beta\left(P_{d+1}+f=0\right)-u^{A+B+s-1}&= u^A\left(\beta(f=0)-u^{s-1}\right)\\
&= -u^{A+\sigma^+-1}+u^{B+\sigma^-}
\end{align*}
By Lemma \ref{lem:BETAfib1}, we have
\begin{align*}
\beta\left(P_{d+1}+f=1\right)-u^{A+B+s-1}&= u^B\left(\beta(f=1)-u^{s-1}\right) \\ &=u^{B+\sigma^-}
\end{align*}By Lemma \ref{lem:BETAfib-1}, we have
\begin{align*}
\beta\left(P_{d+1}+f=-1\right)-u^{A+B+s-1} &= u^A\left(\beta(f=-1)-u^{s-1}\right)\\ &=-u^{A+\sigma^+-1}
\end{align*}
\item Assume that $A>B$. \\
By Lemma \ref{lem:BETAfib0}, we have
\begin{align*}
\beta\left(P_{d+1}+f=0\right)-u^{A+B+s-1}&=u^A\left(\beta(f=-1)-u^{s-1}\right)+u^B\left(\beta(f=0)-\beta(f=-1)\right) \\
&=-u^{A+\sigma^+-1}+u^{B+\sigma^-}
\end{align*}
By Lemma \ref{lem:BETAfib1}, we have
\begin{align*}
\beta\left(P_{d+1}+f=1\right)-u^{A+B+s-1}&=u^B\left(\beta(f=0)-\beta(f=-1)\right) \\ &= u^{B+\sigma^-}
\end{align*}
By Lemma \ref{lem:BETAfib-1}, we have
\begin{align*}
\beta\left(P_{d+1}+f=-1\right)-u^{A+B+s-1}&=u^A\left(\beta(f=-1)-u^{s-1}\right) \\ &=-u^{A+\sigma^+-1}
\end{align*}
\item Assume that $A<B$. \\
By Lemma \ref{lem:BETAfib0}, we have
\begin{align*}
\beta\left(P_{d+1}+f=0\right)-u^{A+B+s-1}&=u^A\left(\beta(f=0)-\beta(f=1)\right)+u^B\left(\beta(f=1)-u^{s-1}\right) \\
&=-u^{A+\sigma^+-}+u^{B+\sigma^-}
\end{align*}
By Lemma \ref{lem:BETAfib1}, we have
\begin{align*}
\beta\left(P_{d+1}+f=1\right)-u^{A+B+s-1}&=u^B\left(\beta(f=1)-u^{s-1}\right) \\ &= u^{B+\sigma^-}
\end{align*}
By Lemma \ref{lem:BETAfib-1}, we have
\begin{align*}
\beta\left(P_{d+1}+f=-1\right)-u^{A+B+s-1}&=u^A\left(\beta(f=0)-\beta(f=1)\right) \\ &=-u^{A+\sigma^+-1}
\end{align*}
\end{enumerate}
\end{enumerate}
\end{proof}

By evaluating in $u=-1$ the previous formulae, we obtain the following corollary.
\begin{cor}
$$\chi_c(f=0)=(-1)^{s-1}+(-1)^{\sigma^+}+(-1)^{\sigma^-}$$
$$\chi_c(f=1)=(-1)^{s-1}+(-1)^{\sigma^-}$$
$$\chi_c(f=-1)=(-1)^{s-1}+(-1)^{\sigma^+}$$
\end{cor}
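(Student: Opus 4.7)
The plan is simply to evaluate each of the three identities of Theorem \ref{thm:vppfibers} at $u = -1$, using the relation $\beta([A])_{|u=-1} = \chi_c(A)$ recorded in the remark following Theorem \ref{thm:vpp}. The only thing that needs verification is that the case distinctions appearing in Theorem \ref{thm:vppfibers} (according to whether $m = \infty$, $A_m > B_m$ or $B_m > A_m$) become vacuous after specialization, so that the three stated equalities hold uniformly.

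The underlying arithmetic fact is the identity $-(-1)^{k-1} = (-1)^k$, so that $-u^{k-1}$ and $u^k$ have the same value at $u = -1$. Applying this to the first formula of Theorem \ref{thm:vppfibers}, both expressions $-u^{\sigma^+-1} + u^{\sigma^-}$ and $u^{\sigma^+} - u^{\sigma^--1}$ evaluate to $(-1)^{\sigma^+} + (-1)^{\sigma^-}$ at $u = -1$, giving $\chi_c(f = 0) = (-1)^{s-1} + (-1)^{\sigma^+} + (-1)^{\sigma^-}$ without needing to know which branch of the theorem applies.

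For $\chi_c(f = 1)$, the two possible values of $\beta(f=1) - u^{s-1}$ are $u^{\sigma^-}$ and $-u^{\sigma^--1}$; both collapse to $(-1)^{\sigma^-}$ at $u = -1$, whence $\chi_c(f = 1) = (-1)^{s-1} + (-1)^{\sigma^-}$. The third formula follows in the same way, or alternatively by the symmetry $\beta(f = -1) = \beta((-f) = 1)$, which swaps $\sigma^+$ and $\sigma^-$.

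I expect no real obstacle: the corollary is a mechanical specialization of Theorem \ref{thm:vppfibers}, and its only content is the observation that the finer information visible to the virtual Poincar\'e polynomial (namely, the sign of $A_m - B_m$ at the minimal index $m$ where they differ) is invisible to the Euler characteristic with compact support. This is consistent with the general principle, recalled in the paper, that every additive invariant factorizing through semialgebraic homeomorphisms factorizes through $\chi_c$, and illustrates why working over $K_0(\AS)$ with $\beta$ rather than over semialgebraic sets with $\chi_c$ is necessary in the main argument.
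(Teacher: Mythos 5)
Your proof is correct and is precisely what the paper does (the paper states it in a single sentence, "By evaluating in $u=-1$ the previous formulae"); you have just spelled out the verification that the case distinctions of Theorem \ref{thm:vppfibers} collapse under the substitution $u=-1$ via $-(-1)^{k-1}=(-1)^k$.
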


\footnotesize


\begin{thebibliography}{10}

\bibitem{JBCThese}
{\sc J.-B. Campesato}, {\em Une fonction zêta motivique pour l'étude des
  singularités réelles}, PhD thesis, Université Nice Sophia Antipolis, 12
  2015.

\bibitem{JBC1}
\leavevmode\vrule height 2pt depth -1.6pt width 23pt,
  \href{http://dx.doi.org/10.1017/nmj.2016.29}{{\em An inverse mapping theorem
  for blow-{N}ash maps on singular spaces}}, Nagoya Math. J., 223 (2016),
  pp.~162--194.

\bibitem{JBC4}
\leavevmode\vrule height 2pt depth -1.6pt width 23pt,
  \href{http://arxiv.org/abs/arXiv:1612.08269}{{\em On the arc-analytic type of
  some weighted homogeneous polynomials}}, 2016, arXiv:1612.08269.

\bibitem{JBC3}
\leavevmode\vrule height 2pt depth -1.6pt width 23pt,
  \href{http://www.rimath.saitama-u.ac.jp/research/pdf/smj31-3.pdf}{{\em From
  the blow-analytic equivalence to the arc-analytic equivalence: a survey}},
  Saitama Math. J., 31 (2017), pp.~35--78.
\newblock Proceedings of the Sixth Japanese-Australian Workshop on Real and
  Complex Singularities.

\bibitem{JBC2}
\leavevmode\vrule height 2pt depth -1.6pt width 23pt,
  \href{http://aif.cedram.org/aif-bin/fitem?id=AIF_2017__67_1_143_0}{{\em On a
  motivic invariant of the arc-analytic equivalence}}, Ann. Inst. Fourier
  (Grenoble), 67 (2017), pp.~143--196.

\bibitem{DL98}
{\sc J.~Denef and F.~Loeser}, {\em Motivic {I}gusa zeta functions}, J.
  Algebraic Geom., 7 (1998), pp.~505--537.

\bibitem{Fic05}
{\sc G.~Fichou}, \href{http://dx.doi.org/10.1112/S0010437X05001168}{{\em
  Motivic invariants of arc-symmetric sets and blow-{N}ash equivalence}},
  Compos. Math., 141 (2005), pp.~655--688.

\bibitem{Fic06}
\leavevmode\vrule height 2pt depth -1.6pt width 23pt,
  \href{http://dx.doi.org/10.2996/kmj/1143122384}{{\em The corank and the index
  are blow-{N}ash invariants}}, Kodai Math. J., 29 (2006), pp.~31--40.

\bibitem{Fic06-rims}
\leavevmode\vrule height 2pt depth -1.6pt width 23pt, {\em Towards a
  classification of blow-{N}ash types}, in Singularities and o-minimal
  category, RIMS Kôkyûroku 1540, Kyoto University, 04 2007, pp.~145--151.

\bibitem{Fuk97}
{\sc T.~Fukui}, \href{http://dx.doi.org/10.1023/A:1000177700927}{{\em Seeking
  invariants for blow-analytic equivalence}}, Compositio Math., 105 (1997),
  pp.~95--108.

\bibitem{FKP10}
{\sc T.~Fukui, K.~Kurdyka, and A.~Parusiński},
  \href{http://arxiv.org/abs/arXiv:1003.0826}{{\em Inverse function theorems
  for arc-analytic homeomorphisms}}, 2010, arXiv:1003.0826.

\bibitem{FP00}
{\sc T.~Fukui and L.~Paunescu},
  \href{http://dx.doi.org/10.2969/jmsj/05220433}{{\em Modified analytic
  trivialization for weighted homogeneous function-germs}}, J. Math. Soc.
  Japan, 52 (2000), pp.~433--446.

\bibitem{GLM}
{\sc G.~Guibert, F.~Loeser, and M.~Merle},
  \href{http://dx.doi.org/10.1215/S0012-7094-06-13232-5}{{\em Iterated
  vanishing cycles, convolution, and a motivic analogue of a conjecture of
  {S}teenbrink}}, Duke Math. J., 132 (2006), pp.~409--457.

\bibitem{HP04}
{\sc J.-P. Henry and A.~Parusi\'nski},
  \href{http://dx.doi.org/10.4064/bc65-0-5}{{\em Invariants of bi-{L}ipschitz
  equivalence of real analytic functions}},
  \href{http://dx.doi.org/10.4064/bc65-0-5}{in Geometric singularity theory},
  vol.~65 of Banach Center Publ., Polish Acad. Sci. Inst. Math., Warsaw, 2004,
  pp.~67--75.

\bibitem{KP03}
{\sc S.~Koike and A.~Parusi{\'n}ski},
  \href{http://aif.cedram.org/item?id=AIF_2003__53_7_2061_0}{{\em Motivic-type
  invariants of blow-analytic equivalence}}, Ann. Inst. Fourier (Grenoble), 53
  (2003), pp.~2061--2104.

\bibitem{Kuo85}
{\sc T.-C. Kuo}, \href{http://dx.doi.org/10.1007/BF01388802}{{\em On
  classification of real singularities}}, Invent. Math., 82 (1985),
  pp.~257--262.

\bibitem{Kur88}
{\sc K.~Kurdyka}, \href{http://dx.doi.org/10.1007/BF01460044}{{\em Ensembles
  semi-alg\'ebriques sym\'etriques par arcs}}, Math. Ann., 282 (1988),
  pp.~445--462.

\bibitem{MP03}
{\sc C.~McCrory and A.~Parusi{\'n}ski},
  \href{http://dx.doi.org/10.1016/S1631-073X(03)00168-7}{{\em Virtual {B}etti
  numbers of real algebraic varieties}}, C. R. Math. Acad. Sci. Paris, 336
  (2003), pp.~763--768.

\bibitem{MP11}
\leavevmode\vrule height 2pt depth -1.6pt width 23pt, {\em The weight
  filtration for real algebraic varieties}, in Topology of stratified spaces,
  vol.~58 of Math. Sci. Res. Inst. Publ., Cambridge Univ. Press, Cambridge,
  2011, pp.~121--160.

\bibitem{Par04}
{\sc A.~Parusi{\'n}ski},
  \href{http://dx.doi.org/10.1007/s00208-003-0486-x}{{\em Topology of injective
  endomorphisms of real algebraic sets}}, Math. Ann., 328 (2004), pp.~353--372.

\bibitem{PP}
{\sc A.~Parusi\'nski and L.~Păunescu},
  \href{http://dx.doi.org/10.1016/j.aim.2017.01.016}{{\em Arc-wise analytic
  stratification, {W}hitney fibering conjecture and {Z}ariski
  equisingularity}}, Adv. Math., 309 (2017), pp.~254--305.

\bibitem{Qua01}
{\sc R.~Quarez}, \href{http://aif.cedram.org/item?id=AIF_2001__51_1_43_0}{{\em
  Espace des germes d'arcs r\'eels et s\'erie de {P}oincar\'e d'un ensemble
  semi-alg\'ebrique}}, Ann. Inst. Fourier (Grenoble), 51 (2001), pp.~43--68.

\bibitem{YS78}
{\sc E.~Yoshinaga and M.~Suzuki}, {\em On the topological types of
  singularities of {B}rieskorn-{P}ham type}, Sci. Rep. Yokohama Nat. Univ.
  Sect. I,  (1978), pp.~37--43.

\end{thebibliography}
\end{document}